\newtheorem{theorem}{Theorem}[section]
\newtheorem{corollary}[theorem]{Corollary}
\newtheorem{example}[theorem]{Example}
\newtheorem{lemma}[theorem]{Lemma}
\newtheorem{proposition}[theorem]{Proposition}
\def \RR{{\mathcal R}}
\def \LL{{\mathcal L}}
\def \DD{{\mathcal D}}
\def \VV{{\mathcal V}}
\newcommand{\Drel}{\mathbin{\mathcal D}}
\newcommand{\set}[1]{\{\,#1\,\}}
\begin{document}

\title{\textbf{Distributivity in Skew Lattices}}

\author{Michael Kinyon}
\address{University of Denver, USA}

\author{Jonathan Leech}
\address{Westmont College. USA}

\author{Jo\~ao Pita Costa} 
\address{In\v stitut Jozef \v Stefan, Slovenia}

\date{\today}

\maketitle


\begin{abstract}
Distributive skew lattices satisfying $x\wedge (y\vee z)\wedge x = (x\wedge y\wedge x) \vee (x\wedge z\wedge x)$ and its dual are studied, along with the larger class of linearly distributive skew lattices, whose totally preordered subalgebras are distributive. 
Linear distributivity is characterized in terms of the behavior of the natural partial order between comparable $\DD$-classes. 
This leads to a second characterization in terms of strictly categorical skew lattices. 
Criteria are given for both types of skew lattices to be distributive.
\\

Keywords: skew lattice, distributive, partial ordering, $\DD$-class 

Mathematics Subject Classification (2010): 06A11, 03G10, 06F05
\end{abstract}


\section{Introduction}
\label{Introduction}
 
Recall that a lattice $(L; \wedge, \vee)$ is \emph{distributive} if the identity $x\wedge (y \vee z) = (x\wedge y) \vee (x\wedge z)$ holds on $L$. 
One of the first results in lattice theory is the equivalence of this identity to its dual, $x\vee(y\wedge z) = (x\vee y) \wedge (x\vee z)$. 
Distributive lattices are also characterized as being \emph{cancellative} in that $x\wedge y = x\wedge z$ and $x\vee y = x\vee z$ jointly imply $y = z$. 
A third characterization is that neither of the 5-element lattices below can be embedded in the given lattice.

\begin{center}
$\begin{array}{ccccc}

\mathbf{M}_{3}

&

\begin{tikzpicture}[scale=.7]

  \node (1) at (0,1){$1$} ;
  \node (a) at (-1,0){$\cdot$} ;
  \node (b) at (0,0){$\cdot$};
  \node (c) at (1,0){$\cdot$}  ;
  \node (0) at (0,-1){$0$} ;
  
    \draw (1) -- (c) -- (0) -- (a) -- (1) -- (b) -- (0);

\end{tikzpicture}

&

&

\mathbf{N}_{5}

&

\begin{tikzpicture}[scale=.7]

  \node (1) at (0,1){$1$} ;
  \node (a) at (-1,0){$\cdot$} ;
  \node (b) at (1,0.5) {$\cdot$} ;
  \node (c) at (1,-0.5) {$\cdot$} ; 
  \node (0) at (0,-1) {$0$} ;
  
      \draw (1) -- (a) -- (0) -- (c) -- (b) -- (1) ;

\end{tikzpicture}\\

\end{array}$
\end{center}

Distributivity also arises when studying \emph{skew lattices}, that is, algebras with associative, idempotent binary operations $\vee $ and $\wedge $ that satisfy the absorption identities:

\begin{equation}\label{absidentities}\tag{1.1}
x\wedge (x\vee y) = x = (y\vee x)\wedge x \text{  and  } x\vee (x\wedge y) = x = (y\wedge x)\vee x.  
\end{equation} %

\noindent Given that $\wedge$ and $\vee$ are associative and idempotent, (\ref{absidentities}) is equivalent to the dualities:

\begin{equation}\label{absequivalences}\tag{1.2}
x\wedge y=x \text{  iff  } x\vee y=y \text{  and  } x\wedge y=y \text{  iff  } x\vee y=x. 
\end{equation} %

\noindent For skew lattices, the distributive identities of greatest interest have been the dual pair:

\begin{equation}\label{GMD}\tag{1.3}
x\wedge (y\vee z)\wedge x = (x\wedge y\wedge x)\vee (x\wedge z\wedge x);  
\end{equation} %

\begin{equation}\label{GJD}\tag{1.4}
x\vee (y\wedge z)\vee x = (x\vee y\vee x)\wedge (x\vee z\vee x).  
\end{equation} %

\noindent Indeed, a skew lattice is \emph{distributive} if it satisfies both. Unlike the case of lattices, (\ref{GMD}) and (\ref{GJD}) are not equivalent. 
Spinks, however, obtained a computer proof in \cite{Sp00} (humanized later by Cvetko-Vah in \cite{Ka06}) of their equivalence for skew lattices that are \emph{symmetric} in that:

\begin{equation}\label{sym}\tag{1.5} 
x\wedge y = y\wedge x \text{  iff  } x\vee y = y\vee x. 
\end{equation}%

\noindent (See \cite{Ka06}, \cite{Sp00R}, \cite{Sp00}.) 
Also unlike lattices, distributive skew lattices need not be \emph{cancellative} in that they need not satisfy:

\begin{equation}\label{canc}\tag{1.6}
x\vee y=x\vee z \text{  and  } x\wedge y=x\wedge z \text{  imply  } y=z \text{,  and  } x\vee z=y\vee z \text{  and  } x\wedge z=y\wedge z \text{  imply  } x=y.
\end{equation} %

Conversely, cancellative skew lattices need not be distributive, but they are always symmetric, unlike distributive skew lattices. $\mathbf M_{3}$ and $\mathbf N_{5}$ are forbidden subalgebras of both types of algebras. 
Their absence is equivalent to the weaker condition of being \emph{quasi-distributive} in that the skew lattice has a distributive maximal lattice image. (See \cite{Ka11c} Theorem 3.2.)
Of course, many skew lattices are both distributive and cancellative. This is true for skew Boolean algebras (\cite{Ba11}, \cite{BL}, \cite{BS}, \cite{Ku12}, \cite{Le90}, \cite{Le08}, \cite{Sp00}, \cite{Sp06}) and skew lattices of idempotents in rings (\cite{Ka05c}, \cite{Ka05}, \cite{Ka08}, \cite{Ka12}, \cite{Ka11}, \cite{Le89}, \cite{Le05}). 
Identities \eqref{GMD} and \eqref{GJD} also arise in studying broader types of noncommutative lattices. (See \cite{La02} Section 6.) 

Identities \eqref{GMD} and \eqref{GJD} insure that the maps $x \mapsto a\wedge x\wedge a$ and $x \mapsto a\vee x\vee a$ are homomorphic retractions of $\mathbf S$ onto the respective subalgebras $\set{x \in S\mid a\wedge x=x=x\wedge a}$ and $\set{x \in S\mid a\vee x=a=x\vee a}$ for each element $a$ in the skew lattice $\mathbf S$. In this paper we study further effects of being distributive, as well as connections between distributive skew lattices and other varieties of algebras. A main concept in our study is \emph{linear distributivity} which assumes that all subalgebras that are totally preordered under the natural preorder $\succeq$ as defined in \eqref{pre} below, are distributive. 
This is unlike the case for lattices where totally ordered sublattices are automatically distributive. Like quasi-distributivity, linear distributivity, is necessary but not sufficient for a skew lattice to be distributive.

We begin by reviewing some of the required background for this paper in Section \ref{Background}. (For more thorough remarks, see \cite{Le96} or introductory remarks in \cite{CAT}.) 
In Section \ref{Linear Distributivity} linear distributivity is introduced with characterizing identities given in Theorem \ref{lindist}. In the next section it is studied in terms of the natural partial order $\geq$ defined in \eqref{poset} below, with attention given to the behavior of $\geq$ on a skew chain of comparable $\DD$-classes, $A > B > C$. 
Distributive skew chains are characterized by the behavior of their \emph{midpoint sets} given  by $\mu(a, c) = \set{b\in B\mid a>b>c}$ for any pair $a>c$ with $a\in A$ and $c\in C$. While these sets often contain many midpoints, (\ref{GMD}) and (\ref{GJD}) minimize their size. The details are given in Section \ref{Midpoints and Distributive Skew Chains}, whose main result, Theorem \ref{disteq}, characterizes distributive skew chains (and by extension, linearly distributive skew lattices) not only in terms of midpoints but also in terms of strictly categorical skew lattices (first studied in \cite{CAT}, Section \ref{Midpoints and Distributive Skew Chains}).
The latter generalize both normal skew lattices (where $(S, \wedge)$ is a normal band) first studied in 
this journal \cite{Le92} and their $\wedge-\vee$ duals.

Is linear distributivity in concert with quasi-distributivity enough to guarantee that a skew lattice is distributive? In general, the answer is no. It is, however, for strictly categorical skew lattices, which form a significant subclass of linearly distributive skew lattices. (See Theorem \ref{strcat} and the relevant discussion in Section \ref{Midpoints and Distributive Skew Chains}.) 
If we assume that the skew lattice is symmetric, the answer is yes (Theorem \ref{distrib}). A characterization of those linearly distributive and quasi-distributive skew lattices that are distributive is given in Theorem \ref{distlindist}.
%


\section{Background}
\label{Background}

Returning first to symmetric skew lattices, they form a variety of skew lattices that is characterized by the following identities:

\begin{equation}\label{syma}\tag{2.1}
x\vee y\vee (x\wedge y) = (y\wedge x)\vee y\vee x 
\end{equation}  
\begin{equation}\label{symb}\tag{2.2}
x\wedge y\wedge (x\vee y) = (y\vee x)\wedge y\wedge x 
\end{equation}  

\noindent given first by Spinks \cite{Le05}.
The identity \eqref{syma} characterizes \emph{upper symmetry} ($x \wedge y = y \wedge x$ implies $x \vee y = y \vee x$ for all $x, y\in S$) while the identity \eqref{symb} characterizes lower symmetry ($x \vee y = y \vee x$ implies $x \wedge y = y \wedge x$ for all $x, y\in S$).

The \emph{GreenÕs relations} are defined on a skew lattice by

\begin{equation}\label{R}\tag{2.3R} a\RR b \Leftrightarrow (a\wedge b = b \text{  and  } b\wedge a = a) \Leftrightarrow (a\vee b = a \text{  and  } b\vee a = b);\end{equation}
\begin{equation}\label{L}\tag{2.3L} a\LL b \Leftrightarrow (a\wedge b = a \text{  and  } b\wedge a = b) \Leftrightarrow (a\vee b = b \text{  and  } b\vee a = a);\end{equation}
\begin{equation}\label{D}\tag{2.3D} a\DD b \Leftrightarrow (a\wedge b\wedge a = a \text{  and  } b\wedge a\wedge b = b) \Leftrightarrow (a\vee b\vee a = a \text{  and  } b\vee a\vee b = b).\end{equation}

All three relations are canonical congruences, with $\LL \vee \RR = \LL \circ \RR = \RR \circ \LL = \DD$ and $\LL \cap \RR = \Delta=\set{(x,x)\mid x\in S }$, the identity equivalence.
Their congruence classes are called $\DD$-classes, $\LL$-classes or $\RR$-classes and are often denoted by $\DD_{x}$, $\LL_{x}$ or $\LL_{x}$ where $x$ is some class member.

A skew lattice $\mathbf S$ is \emph{rectangular} if $x\wedge y\wedge x = x$, or dually $y\vee x\vee y = y$, holds on $\mathbf S$. 
Such a skew lattice is anti-commutative in that $x\wedge y = y\wedge x$ or $x\vee y = y\vee x$ imply $x = y$. 
The \emph{First Decomposition Theorem} (see \cite{Le89} Theorem 1.7) states that \emph{in any skew lattice $\mathbf S$ each $\DD$-congruence class is a maximal rectangular subalgebra of $\mathbf S$ and $\mathbf S/\DD$ is the maximal lattice image of $\mathbf S$}. In particular, a rectangular skew lattice consists of a single $\DD$-class. 
A skew lattice is \emph{right-handed} [respectively \emph{left-handed}] if it satisfies the identities 

\begin{equation}\label{RH}\tag{2.4R} 
x\wedge y\wedge x = y\wedge x\text{  and  } x\vee y\vee x = x\vee y  
\end{equation}
\begin{equation}\label{LH}\tag{2.4L}
[x\wedge y\wedge x = x\wedge y \text{  and  } x\vee y\vee x = y\vee x].  
\end{equation}

\noindent Equivalently, $x\wedge y = y$ and $x\vee y = x$ [$x\wedge y = x$ and $x\vee y = y$] hold in each $\DD$-class, thus reducing $\DD$ to $\RR$ [or $\LL$]. The \emph{Second Decomposition Theorem} (see \cite{Le89} Theorem 1.15) states that \emph{given any skew lattice $\mathbf S$, $\mathbf S/\RR$ and $\mathbf S/\LL$ are its respective maximal left and right-handed images, with $\mathbf S$ being isomorphic to the fibered product, $S/\RR \times_{S/\DD} S/\LL$, of both over their common maximal lattice image under the map $x \mapsto (\RR_{x}, \LL_{x})$}. 
All this is because every skew lattice is \emph{regular} in that for all $x,y,z\in S$ and all $x',x''\in \DD_{x}$ the following holds:

\begin{equation}\label{reg}\tag{2.5} 
x\vee y\vee x'\vee z\vee x = x\vee y\vee z\vee x \text{  and  } x\wedge y\wedge x'\wedge z\wedge x = x\wedge y\wedge z\wedge x.
\end{equation}

A skew lattice $\mathbf S$ is distributive (symmetric, cancellative, etc.) if and only if its left and right factors $\mathbf S/\RR$ and $\mathbf S/\LL$ are distributive (symmetric, cancellative, etc.). In general, $\mathbf S$ belongs to a variety $\VV$ of skew lattices if and only if both $\mathbf S/\RR$ and $\mathbf S/\LL$ do. (See also \cite{Ka05b} and \cite{Le96}, Section 1).

The natural preorder is defined on a skew lattice by 

\begin{equation}\label{pre}\tag{2.6} 
a \succeq b \Leftrightarrow a\vee b\vee a = a\text{  or, equivalently,  }b\wedge a\wedge b = b.   
\end{equation} 

\noindent Observe that $a\succeq b$ in $\mathbf S$ if and only if $\DD_{a} \geq \DD_{b}$ in the lattice $\mathbf S/\DD$ where$\DD_{a}$ and $\DD_{b}$ are the
respective $\DD$-classes of $a$ and $b$. Useful variants of \eqref{RH} and \eqref{LH} for the respective right and left-handed cases are as follows:

\begin{equation}\label{preR}\tag{2.7R} 
x \succeq x' \Rightarrow x\wedge y\wedge x' = y\wedge x' \text{  and  } x\vee y\vee x' = x\vee y; 
\end{equation}
\begin{equation}\label{preL}\tag{2.7L}
x \succeq x' \Rightarrow x'\wedge y\wedge x = x'\wedge y \text{  and  } x\vee y\vee x' = y\vee x'.
\end{equation}

\noindent We let $a \succ b$ denote $a \succeq b$ when $a \DD b$ does not hold.

\begin{lemma} 
For left-handed skew lattices, the following identities hold:
\begin{equation}\label{id1}\tag{2.8}
x\wedge (y\vee x) = x = (x\wedge y)\vee x. 
\end{equation}
\begin{equation}\label{id2}\tag{2.9}
(x\vee (y\wedge x))\wedge x = x\vee (y\wedge x) 
\end{equation}
\begin{equation}\label{id3}\tag{2.10}
(x\vee (y\wedge x))\wedge y = y\wedge x.
\end{equation}
\end{lemma}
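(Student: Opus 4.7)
The plan is to prove all three identities directly using only the absorption laws \eqref{absidentities}, their equivalent forms \eqref{absequivalences}, and the defining left-handed identities \eqref{LH}. The main trick throughout is to insert a redundant copy of $x$ via left-handedness, then regroup by associativity and collapse with absorption.

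For \eqref{id1}, the left-handed identity $x\wedge a\wedge x = x\wedge a$ (with $a = y\vee x$) gives $x\wedge(y\vee x) = x\wedge(y\vee x)\wedge x$; reassociating and applying the absorption $(y\vee x)\wedge x = x$ from \eqref{absidentities} yields
\[
x\wedge(y\vee x) \;=\; x\wedge\bigl((y\vee x)\wedge x\bigr) \;=\; x\wedge x \;=\; x.
\]
The dual half $(x\wedge y)\vee x = x$ is obtained the same way, using $a\vee x = x\vee a\vee x$ and the absorption $x\vee(x\wedge y) = x$. For \eqref{id2}, setting $u = x\vee(y\wedge x)$, I would compute $u\vee x$ first: left-handedness and absorption give
\[
u\vee x \;=\; x\vee(y\wedge x)\vee x \;=\; (y\wedge x)\vee x \;=\; x.
\]
The equivalence \eqref{absequivalences} then converts $u\vee x = x$ into $u\wedge x = u$, which is exactly \eqref{id2}.

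For \eqref{id3}, write $v = y\wedge x$ and $u = x\vee v$; the target is $u\wedge y = v$. I would first compute
\[
(u\wedge y)\wedge v \;=\; (x\vee v)\wedge(y\wedge v) \;=\; (x\vee v)\wedge v \;=\; v,
\]
using $y\wedge v = y\wedge(y\wedge x) = y\wedge x = v$ and absorption $(x\vee v)\wedge v = v$. Then, since $\DD$ is a congruence with quotient a lattice, we have $\DD_u = \DD_x\vee(\DD_y\wedge \DD_x) = \DD_x$ by lattice absorption, hence $\DD_{u\wedge y} = \DD_x\wedge \DD_y = \DD_v$, so $u\wedge y \DD v$. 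In a left-handed skew lattice $\DD$ reduces to $\LL$, which forces $(u\wedge y)\wedge v = u\wedge y$ by the definition of $\LL$. Comparing this with the direct computation $(u\wedge y)\wedge v = v$ gives $u\wedge y = v$, proving \eqref{id3}.

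The one step requiring care is the last argument in \eqref{id3}: a priori the computation $(u\wedge y)\wedge v = v$ does not say $u\wedge y$ equals $v$, and there is no purely equational absorption that closes this gap. What makes the argument work is the rigidity supplied by left-handedness, which collapses $\DD$ to $\LL$; once $\DD$-equivalence of $u\wedge y$ and $v$ is established via the maximal lattice image, the $\LL$ identities force the desired equality. Identities \eqref{id1} and \eqref{id2}, by contrast, are straightforward manipulations and should present no obstacle.
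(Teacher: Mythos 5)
Your proofs of (2.8) and (2.9) are correct and essentially match the paper's: (2.8) is absorption after inserting a redundant $x$ via left-handedness, and (2.9) comes from $x\vee(y\wedge x)\vee x = x$ together with the dualities (1.2). For (2.10), however, you take a genuinely different route. The paper closes this case purely equationally: writing $u = x\vee(y\wedge x)$, it uses (2.9) and left-handedness to rewrite $u\wedge y = u\wedge x\wedge y\wedge x = u\wedge y\wedge x$, and absorption then collapses this to $y\wedge x$. You instead prove only the one-sided identity $(u\wedge y)\wedge(y\wedge x) = y\wedge x$ and upgrade it to the desired equality by noting that $u\wedge y$ and $y\wedge x$ are $\DD$-related (computed in the maximal lattice image $S/\DD$, where $\DD_u=\DD_x$ by lattice absorption) and that $\DD$ reduces to $\LL$ in the left-handed case, so that $(u\wedge y)\wedge(y\wedge x) = u\wedge y$ as well. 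Every step of this checks out, and the structural facts you invoke --- that $\DD$ is a congruence with lattice quotient and that left-handedness collapses $\DD$ to $\LL$ --- are part of the background recalled in Section 2; the paper itself uses exactly this ``look at $S/\DD=S/\LL$'' device later, in the proof of Theorem 3.5. What the paper's version buys is a short, entirely equational rewrite from (2.9) and absorption; what yours buys is an explicit identification of why a naive absorption computation only yields the one-sided equality, together with the standard handedness-rigidity argument that bridges that gap.
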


\begin{proof} 
If $\mathbf S$ is a left-handed skew lattice, then $x\wedge (y\vee x) =_{\eqref{LH}} x\wedge (x\vee y\vee x) =_{\eqref{absidentities}} x$. 
Similarly, $(x\wedge y) \vee x = x$. 
As for \eqref{id2} observe that for \emph{all} skew lattices, $x\vee(y\wedge x)\vee x = x$, since $x \succeq y\wedge x$. Thus \eqref{id2} follows from \eqref{absequivalences}. \eqref{id3} follows from:

$\begin{array}{lcl}
(x \vee (y \wedge x)) \wedge y &=_{\eqref{id2},\eqref{LH}}& (x \vee (y \wedge x)) \wedge x \wedge y \wedge x \\
&=_{\eqref{id2}}& (x \vee (y \wedge x)) \wedge y \wedge x \\
&=_{\eqref{absidentities}}& y \wedge x.
\end{array}$
 
\end{proof}

The natural preorder $\succeq$ is refined by the \emph{natural partial order} which is defined on $\mathbf{S}$ by

\begin{equation}\label{poset}\tag{2.11}
x \geq y \leftrightarrow x\wedge y = y \wedge x = y \text{  or, equivalently,  }  x \vee y = y \vee x = x.
\end{equation}
All preorders and partial orders are assumed to be natural. Of course $x > y$ means $x \geq y$ but $x \neq y$.
Given $a \succeq b$, elements $a_b \in \DD_a$ and $b_a\in \DD_b$ exist such that $a \geq b_a$ and $a_b \geq b$. To see this just consider $a_b= b\vee a\vee b$ and $b_a= a\wedge b\wedge a$.


\section{Linear Distributivity}
\label{Linear Distributivity}

A skew lattice $\mathbf S$ is \emph{linearly distributive} if every subalgebra $\mathbf T$ that is totally preordered under $\succeq $ is distributive. 
Since totally preordered skew lattices are trivially symmetric, \emph{a skew lattice $\mathbf S$ is linearly distributive if and only if each totally preordered subalgebra $\mathbf T$ of $\mathbf S$ satisfies \eqref{GMD} or, equivalently, \eqref{GJD}}.

\begin{theorem} 
Linearly distributive skew lattices form a variety of skew lattices.
\end{theorem}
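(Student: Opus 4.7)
The plan is to invoke Birkhoff's HSP theorem: I will argue that the class of linearly distributive skew lattices is closed under the operators $S$ (subalgebras), $P$ (direct products), and $H$ (homomorphic images) within the variety of all skew lattices. Closure under $S$ is immediate, since a totally preordered subalgebra of a subalgebra of $\mathbf{S}$ is itself a totally preordered subalgebra of $\mathbf{S}$. Closure under $P$ uses that each projection $\pi_i\colon \prod_j \mathbf{S}_j \to \mathbf{S}_i$ is a homomorphism and therefore preserves $\succeq$: if $\mathbf{T}$ is a totally preordered subalgebra of the product and each $\mathbf{S}_j$ is linearly distributive, then each $\pi_i(\mathbf{T})$ is a totally preordered subalgebra of $\mathbf{S}_i$, hence distributive, and \eqref{GMD} (together with its dual \eqref{GJD}) then transfers back pointwise to $\mathbf{T}$.

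The hard part will be closure under $H$. Let $f\colon \mathbf{S} \twoheadrightarrow \mathbf{S}'$ be surjective with $\mathbf{S}$ linearly distributive, and let $\mathbf{T}' \subseteq \mathbf{S}'$ be a totally preordered subalgebra. Given arbitrary $x',y',z' \in T'$, I would sort them under $\succeq$ as, say, $u' \succeq w' \succeq v'$, and then lift them to a totally preordered triple in $\mathbf{S}$. Pick any preimage $u$ of $u'$ and any preimage $\tilde w$ of $w'$, and set $w = \tilde w \wedge u \wedge \tilde w$. The relation $u' \succeq w'$ gives $w' \wedge u' \wedge w' = w'$, hence $f(w) = w'$. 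A short calculation using regularity \eqref{reg} reduces $w \wedge u \wedge w$ to $w$, so $u \succeq w$. Iterating the same construction with $w$ in place of $u$ and a preimage of $v'$ in place of $\tilde w$ produces $v \in f^{-1}(v')$ with $w \succeq v$.

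Having obtained $u \succeq w \succeq v$ in $\mathbf{S}$, I observe that the subalgebra $\langle u,w,v\rangle$ is totally preordered: the sublattice of $\mathbf{S}/\DD$ generated by $\DD_u \geq \DD_w \geq \DD_v$ is just $\{\DD_u,\DD_w,\DD_v\}$, so every element of $\langle u,w,v\rangle$ belongs to one of these three $\DD$-classes. Linear distributivity of $\mathbf{S}$ then forces this subalgebra to be distributive, so \eqref{GMD} holds for every assignment of $u,w,v$ to the three variables. Applying $f$ and matching the chosen assignment to the original labels of $x',y',z'$ yields \eqref{GMD} for $x',y',z'$ in $\mathbf{T}'$. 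Since the triple was arbitrary, $\mathbf{T}'$ is distributive, closure under $H$ follows, and Birkhoff's theorem completes the proof.
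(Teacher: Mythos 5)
Your proof is correct, but it takes a genuinely different route from the paper's. The paper proves the result equationally: it observes that the terms $x$, $y\wedge x\wedge y$, $z\wedge y\wedge x\wedge y\wedge z$ always form a $\succeq$-chain and that every instance $a\succeq b\succeq c$ arises by substitution into these terms, so linear distributivity is captured by the finite set of identities obtained by bijectively substituting these three terms for the variables of \eqref{GMD}; being an equationally defined class, it is a variety. You instead verify closure under $H$, $S$ and $P$ and invoke Birkhoff's theorem. Your $S$- and $P$-closure arguments are fine (homomorphisms preserve $\succeq$, so projections carry totally preordered subalgebras to totally preordered subalgebras), and your $H$-closure argument is essentially sound: the lift $w=\tilde w\wedge u\wedge\tilde w$, $v=\tilde v\wedge w\wedge\tilde v$ is exactly the substitution the paper builds into its terms, and your observation that $\langle u,w,v\rangle$ is totally preordered because its image in $\mathbf S/\DD$ is the three-element chain $\{\DD_u,\DD_w,\DD_v\}$ is the (tacit) fact the paper also relies on. The trade-off: the paper's argument is shorter and yields an explicit equational basis (relative to the skew lattice axioms), which is more information than mere membership in the variety lattice; yours is more conceptual and makes explicit the lifting and generation steps that the paper leaves implicit, at the cost of invoking Birkhoff and of having to treat \eqref{GJD} separately (which you can dispose of by noting, as the paper does, that totally preordered subalgebras are symmetric, so \eqref{GMD} and \eqref{GJD} are equivalent there).
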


\begin{proof}
Consider the terms $x$, $y\wedge x\wedge y$ and $z\wedge y\wedge x\wedge y\wedge z$. 
Clearly $x \succeq y\wedge x\wedge y \succeq z\wedge y\wedge x\wedge y\wedge z$ holds for all skew lattices. 
Conversely given any instance $a \succeq b \succeq c$ in some skew lattice $\mathbf S$, the assignment $x \mapsto a$, $y \mapsto b$, $z \mapsto c$ will return this particular instance. 
Thus a characterizing set of identities for the class of all linearly distributive skew lattices is given by taking the basic identity
$$u\wedge (v \vee w)\wedge u = (u\wedge v\wedge u) \vee (u\wedge w\wedge u)$$
and forming all the identities possible in $x$, $y$, $z$ by making bijective assignments from the variables $\set{u, v, w}$ to the terms $\set{x, y\wedge x\wedge y, z\wedge y\wedge x\wedge y\wedge z}$. 
\end{proof}

In what follows, the following pair of lemmas will be useful.

\begin{lemma}\label{LRdist}
Left-handed skew lattices that satisfy \eqref{GMD} are characterized by:
\begin{equation}\label{GMDa}\tag{3.1L}
x\wedge y\wedge x=x\wedge y \text{  and  } x\wedge (y\vee z)=(x\wedge y)\vee (x\wedge z).   
\end{equation}
Dually, right-handed skew lattices that satisfy \eqref{GMD} are characterized by:
\begin{equation}\label{GMDb}\tag{3.1R}
x\wedge y\wedge x = y\wedge x \text{  and  } (y \vee z) \wedge x = (y \wedge x) \vee (z \wedge x). 
\end{equation}
\end{lemma}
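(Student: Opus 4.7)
The plan is to observe that left-handedness (respectively right-handedness) forces the ``sandwich'' terms $u\wedge v\wedge u$ to collapse to $u\wedge v$ (respectively $v\wedge u$), so that \eqref{GMD} reduces to ordinary one-sided distributivity, and conversely.

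First I would handle the left-handed case. The identity $x\wedge y\wedge x=x\wedge y$ is just the meet half of \eqref{LH}, so it holds in every left-handed skew lattice and is part of the assumptions on either side of the equivalence. Assuming \eqref{GMD}, I would apply this identity three times, once to each outer sandwich occurring in
$$x\wedge(y\vee z)\wedge x=(x\wedge y\wedge x)\vee(x\wedge z\wedge x),$$
obtaining $x\wedge(y\vee z)=(x\wedge y)\vee(x\wedge z)$, which is the second clause of \eqref{GMDa}. For the converse, assuming \eqref{GMDa}, the same three applications of $u\wedge v\wedge u=u\wedge v$ run in reverse: starting from $x\wedge(y\vee z)=(x\wedge y)\vee(x\wedge z)$, re-insert the trailing $x$ on both sides to recover \eqref{GMD}.

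The right-handed case is the $\wedge$-mirror of the argument: $x\wedge y\wedge x=y\wedge x$ is the meet half of \eqref{RH}, and I would use it in the same way to pass between \eqref{GMD} and \eqref{GMDb}. In each direction the computation is just three rewrites using the appropriate handedness identity, so nothing beyond \eqref{LH} or \eqref{RH} is needed.

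There is no real obstacle here; the content of the lemma is that, in the presence of handedness, the outer $x$'s in \eqref{GMD} are redundant, and the remaining identity is the familiar one-sided distributive law. The only thing to be careful about is matching the side on which $x$ sits after the collapse: in the left-handed case $x$ stays on the left, giving $x\wedge(y\vee z)=(x\wedge y)\vee(x\wedge z)$, while in the right-handed case $x$ moves to the right, giving $(y\vee z)\wedge x=(y\wedge x)\vee(z\wedge x)$.
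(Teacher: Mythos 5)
Your proof is correct. The paper in fact states Lemma \ref{LRdist} without proof, treating it as routine, and the argument you give---collapsing the three sandwich terms $x\wedge(y\vee z)\wedge x$, $x\wedge y\wedge x$, $x\wedge z\wedge x$ via the handedness identity, and running the same rewrites backwards for the converse---is exactly the intended verification. The only point you should make explicit is in the converse direction: to conclude from \eqref{GMDa} that the algebra is \emph{left-handed} (and not merely that it satisfies the meet half of \eqref{LH}), observe that $x\wedge y\wedge x=x\wedge y$ alone already forces left-handedness: restricted to a $\DD$-class, where $x\wedge y\wedge x=x$, it gives $x\wedge y=x$, so each $\DD$-class is a left-zero band for $\wedge$, i.e.\ $\DD=\LL$, and the join half $x\vee y\vee x=y\vee x$ of \eqref{LH} then follows via the dualities \eqref{absequivalences}. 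With that one-line remark added, the equivalence is complete, and the right-handed case is the mirror image exactly as you describe.
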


\begin{lemma}\label{RLD} 
In a left-handed totally preordered skew lattice, if $a\wedge (b\vee c) \neq (a\wedge b) \vee (a\wedge c)$, then $a \succ b \succ c$. 
Thus a left-handed skew lattice $\mathbf S$ is linearly distributive if and only if
\begin{equation}\label{LLD}\tag{3.2L}
a\wedge ((b\wedge a) \vee (c\wedge b\wedge a)) = (a\wedge b) \vee (a\wedge c\wedge b) \text{  for all  } a, b, c \in S.  
\end{equation}
Dually a right-handed skew lattice $\mathbf S$ is linearly distributive if and only if
\begin{equation}\label{RLDeq}\tag{3.2R}
((a\wedge b\wedge c) \vee (a\wedge b))\wedge a = (b\wedge c\wedge a) \vee (b\wedge a) \text{  for all  } a, b, c \in S.  
\end{equation}
\end{lemma}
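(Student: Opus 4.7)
The plan is to prove the left-handed assertions and then invoke left-right duality for the right-handed analogue. Throughout I use that by Lemma~\ref{LRdist}, linear distributivity in the left-handed case is equivalent to the identity $x \wedge (y \vee z) = (x \wedge y) \vee (x \wedge z)$ holding on every totally preordered subalgebra, and that in a left-handed skew lattice $x \succeq y$ is equivalent to $y \wedge x = y$, equivalently $y \vee x = x$.

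For the first assertion I prove the contrapositive. Since the ambient skew lattice is totally preordered, assuming the identity fails forces both $a \succ b$ and $b \succ c$ strictly; equivalently, whenever $b \succeq a$ or $c \succeq b$, the identity holds. If $b \succeq a$, then $a \wedge b \wedge a = a$ collapses via left-handedness to $a \wedge b = a$, and since $b \vee c \succeq b \succeq a$ the same reasoning yields $a \wedge (b \vee c) = a$, while $(a \wedge b) \vee (a \wedge c) = a \vee (a \wedge c) = a$ by absorption. If instead $c \succeq b$, then $b \vee c = c$ gives $a \wedge (b \vee c) = a \wedge c$; and since meets respect the preorder and $\DD_c \geq \DD_b$, we have $a \wedge c \succeq a \wedge b$, so $(a \wedge b) \vee (a \wedge c) = a \wedge c$ as well.

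For the equivalence with \eqref{LLD}, observe that the distributive identity $T_1 \wedge (T_2 \vee T_3) = (T_1 \wedge T_2) \vee (T_1 \wedge T_3)$ applied to $T_1 = a$, $T_2 = b \wedge a$, $T_3 = c \wedge b \wedge a$ simplifies---via $x \wedge y \wedge x = x \wedge y$---to exactly \eqref{LLD}. Since $T_1 \succeq T_2 \succeq T_3$ automatically, the subalgebra generated by $T_1, T_2, T_3$ is totally preordered (all generated elements have $\DD$-classes within the finite chain $\{\DD_{T_1}, \DD_{T_2}, \DD_{T_3}\}$ in $\mathbf{S}/\DD$). This gives the forward implication: linear distributivity yields \eqref{LLD}. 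For the converse, given any strict chain $a' \succ b' \succ c'$ in a totally preordered subalgebra of $\mathbf{S}$, the substitution $(a,b,c) = (a',b',c')$ in \eqref{LLD} collapses $b' \wedge a'$ to $b'$ and $c' \wedge b' \wedge a'$ to $c'$ (using $y \wedge x = y$ for $x \succeq y$), yielding the distributive identity for $(a',b',c')$; combined with the first assertion, this covers all triples in the subalgebra, so the subalgebra is distributive.

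The right-handed statement \eqref{RLDeq} is the mirror dual: in a right-handed skew lattice, $x \succeq y$ is characterized by $x \wedge y = y$, and one uses the triple $T_1 = a$, $T_2 = a \wedge b$, $T_3 = a \wedge b \wedge c$ instead. The main delicacy throughout is tracking which of $x \wedge y$ versus $y \wedge x$ simplifies to $y$ when $x \succeq y$ in the left-handed versus right-handed settings; once this is sorted, the case analysis in the first part is short and the equivalence with \eqref{LLD} (resp.~\eqref{RLDeq}) reduces to mechanical substitution combined with that first part.
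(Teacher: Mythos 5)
Your proof is correct and follows essentially the same route as the paper: a case analysis on the total preorder showing the identity can only fail when $a \succ b \succ c$ (your split into $b \succeq a$ or $c \succeq b$ is a slight, equally valid reorganization of the paper's cases $b\succeq a$, $c\succeq a$, $a\succeq c\succeq b$), followed by the substitution $x\mapsto a$, $y\mapsto b\wedge a$, $z\mapsto c\wedge b\wedge a$. The paper's printed proof actually stops after the first sentence of the lemma and leaves the equivalence with \eqref{LLD} and \eqref{RLDeq} implicit, so your explicit treatment of the generated totally preordered subalgebra and the collapse of \eqref{LLD} under $a'\succ b'\succ c'$ simply supplies detail the authors omitted.
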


\begin{proof}
If say $b \succeq a$, then $a\wedge (b\vee c) = a$ and $(a\wedge b)\vee (a\wedge c) = a\vee (a\wedge c) = a$. 
If $c \succeq a$, then $a\wedge (b\vee c) = a$ again, and $(a\wedge b) \vee (a\wedge c) = (a\wedge b) \vee a = (a\wedge b\wedge a) \vee a = a$. 
Thus, inequality only occurs when $a\succeq b,c$. 
But even here, $a\succeq c\succeq b$ gives us $a\wedge (b\vee c)=a\wedge c$ and $(a\wedge c)\succeq (a\wedge b)$ so that $(a\wedge b) \vee (a\wedge c) = a\wedge c$ also. 
Thus, to completely avoid $a\wedge (b\vee c) = (a\wedge b) \vee (a\wedge c)$ we are only left with $a \succ b \succ c$. 
\end{proof}

Linear distributivity is also characterized succinctly by either of a dual pair of identities. We begin with an observation.

\begin{lemma}
Identities \eqref{GMD} and \eqref{GJD} are respectively equivalent to
\begin{equation}\label{GMDc}\tag{3.3}
x \wedge ((y \wedge x) \vee (z \wedge x)) = x \wedge (y \vee z) \wedge x = ((x \wedge y) \vee (x \wedge z)) \wedge x. 
\end{equation}
\begin{equation}\label{GMDd}\tag{3.4}
x \vee ((y \vee x) \wedge (z \vee x)) = x \vee (y \wedge z) \vee x = ((x \vee y) \wedge (x \vee z)) \vee x. 
\end{equation}
\end{lemma}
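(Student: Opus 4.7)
The plan is to prove \eqref{GMD} $\Leftrightarrow$ \eqref{GMDc}; the equivalence \eqref{GJD} $\Leftrightarrow$ \eqref{GMDd} will then follow by the standard $\wedge$/$\vee$ duality on skew lattices, so I would not write that half separately.

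For \eqref{GMDc} $\Rightarrow$ \eqref{GMD}, the key move is to substitute $y \mapsto x \wedge y$ and $z \mapsto x \wedge z$ into \eqref{GMDc}. Setting $u := x \wedge y \wedge x$ and $v := x \wedge z \wedge x$, associativity and idempotency of $\wedge$ rewrite the leftmost expression as $x \wedge (u \vee v)$ and the rightmost as $((x \wedge y) \vee (x \wedge z)) \wedge x$, which by the original \eqref{GMDc} in turn equals $x \wedge (y \vee z) \wedge x$. A direct check gives $x \wedge u = u \wedge x = u$ (and similarly for $v$), so $x \geq u$ and $x \geq v$ in the natural partial order; the $\vee$-formulation of \eqref{poset} then yields $x \geq u \vee v$, since $a \geq b,c$ implies $a \vee (b \vee c) = (a \vee b) \vee c = a$ and $(b \vee c) \vee a = b \vee (c \vee a) = a$. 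Hence $x \wedge (u \vee v) = u \vee v$, giving $u \vee v = x \wedge (y \vee z) \wedge x$, which is \eqref{GMD}.

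For \eqref{GMD} $\Rightarrow$ \eqref{GMDc}, I would invoke the Second Decomposition Theorem to reduce the verification of \eqref{GMDc} to the left-handed and right-handed cases separately. In the left-handed case, Lemma \ref{LRdist} replaces \eqref{GMD} with $x \wedge y \wedge x = x \wedge y$ together with full left-distributivity $x \wedge (y \vee z) = (x \wedge y) \vee (x \wedge z)$, under which all three expressions in \eqref{GMDc} collapse to $(x \wedge y) \vee (x \wedge z)$. The slightly finicky rightmost expression $((x \wedge y) \vee (x \wedge z)) \wedge x$ is handled by rewriting $(x \wedge y) \vee (x \wedge z) = x \wedge (y \vee z)$ and then applying $x \wedge (y \vee z) \wedge x = x \wedge (y \vee z)$. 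The right-handed case is dual via \eqref{GMDb}, with all three expressions collapsing instead to $(y \wedge x) \vee (z \wedge x)$.

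The main obstacle is the forward direction. The substitutions $y \mapsto y \wedge x, z \mapsto z \wedge x$ and $y \mapsto x \wedge y, z \mapsto x \wedge z$ in \eqref{GMD} produce almost immediately the equalities
\[
x \wedge ((y \wedge x) \vee (z \wedge x)) \wedge x = x \wedge (y \vee z) \wedge x = x \wedge ((x \wedge y) \vee (x \wedge z)) \wedge x,
\]
but stripping the outer $\wedge x$ on the right (or the $x \wedge$ on the left) appears to genuinely require one-sided information, which is precisely what the decomposition into left-handed and right-handed factors, combined with Lemma \ref{LRdist}, provides.
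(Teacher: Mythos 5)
Your argument is correct, but it diverges from the paper's proof in the converse direction. The paper proves both implications uniformly, without any appeal to handedness: the whole proof turns on the observation that $((y\wedge x)\vee(z\wedge x))\vee x = x$ by \eqref{absidentities}, so that the dualities \eqref{absequivalences} give $((y\wedge x)\vee(z\wedge x))\wedge x = (y\wedge x)\vee(z\wedge x)$ (and, companionably, $x\wedge((x\wedge y)\vee(x\wedge z)) = (x\wedge y)\vee(x\wedge z)$) in \emph{every} skew lattice. This is exactly the tool you decided was unavailable: your closing remark that stripping the outer $\wedge\, x$ "appears to genuinely require one-sided information" is not right, since these absorption-derived identities let one pass freely between, say, $x\wedge((y\wedge x)\vee(z\wedge x))$ and $x\wedge((y\wedge x)\vee(z\wedge x))\wedge x$, after which \eqref{GMD} applied to the substituted variables finishes the forward direction in two lines. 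Your own forward direction ($\eqref{GMDc}\Rightarrow\eqref{GMD}$) is essentially the paper's converse in different clothing — your use of $x\geq (x\wedge y\wedge x)\vee(x\wedge z\wedge x)$ via \eqref{poset} is the same absorption fact — and it is fine. Your backward direction via the Second Decomposition Theorem and Lemma \ref{LRdist} is also valid (both \eqref{GMD} and \eqref{GMDc} are identities, so reduction to the left- and right-handed factors is legitimate, and the three expressions do collapse as you say, though in the right-handed case the leftmost term $x\wedge((y\wedge x)\vee(z\wedge x))$ needs the same absorption observation rather than \eqref{GMDb} alone). What the paper's route buys is brevity and uniformity; what yours buys is a reduction to the more transparent one-sided distributive laws of Lemma \ref{LRdist}, at the cost of invoking heavier structural machinery for what is really a short equational fact.
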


\begin{proof} 
Since $(y\wedge x)\vee (z\wedge x)\vee x=_{\eqref{absidentities}} x$, the skew lattice dualities \eqref{absequivalences} give us

\begin{equation}\label{equ}\tag{*}
((y \wedge x) \vee (z \wedge x)) \wedge x = (y \wedge x) \vee (z \wedge x). 
\end{equation}
Thus \eqref{GMD} implies,

$$
\begin{array}{lcl}
x\wedge ((y\wedge x)\vee (z\wedge x)) &=& x\wedge ((y\wedge x)\vee (z\wedge x))\wedge x \\
 &=& (x\wedge y\wedge x)\vee (x\wedge z\wedge x) \\
 &=& x\wedge (y\vee z)\wedge x. 
\end{array}
$$ 

\noindent Likewise, \eqref{GMD} implies $((x\wedge y)\vee (x\wedge y))\wedge x=x\wedge (y\vee z)\wedge x$. 
Conversely, if \eqref{GMDc} holds, then

$$
\begin{array}{lclcl}
x\wedge (y\vee z)\wedge x &=& x\wedge ((y\wedge x)\vee (z\wedge x)) &=_{\eqref{equ}}& x\wedge ((y\wedge x)\vee (z\wedge x))\wedge x \\
&=& ((x\wedge y\wedge x)\vee (x\wedge z\wedge x))\wedge x &=_{\eqref{equ}}& (x\wedge y\wedge x)\vee (x\wedge z\wedge x). 
\end{array}
$$

\end{proof}

\begin{corollary}
For all skew lattices, \eqref{GMD} and \eqref{GJD} imply respectively:
\begin{equation}\label{GMDe}\tag{3.5}
x\wedge ((y\wedge x)\vee (z\wedge x)) = ((x\wedge y)\vee (x\wedge z))\wedge x \text{  and}
\end{equation}
\begin{equation}\label{GMDf}\tag{3.6} 
x\vee ((y\vee x)\wedge (z\vee x)) = ((x\vee y)\wedge (x\vee z))\vee x
\end{equation}
\end{corollary}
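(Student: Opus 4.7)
The corollary is immediate from the preceding lemma. The plan is to observe that the lemma has already done all the work: it establishes a triple equality in \eqref{GMDc}, and \eqref{GMDe} is simply the equality of the first and third terms of that triple (dropping the middle expression $x\wedge(y\vee z)\wedge x$). Likewise \eqref{GMDf} is the equality of the outer two terms of \eqref{GMDd}.

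More concretely, I would write: assume \eqref{GMD} holds. Then by the lemma \eqref{GMDc} holds, so
\[
x\wedge((y\wedge x)\vee(z\wedge x)) \;=\; x\wedge(y\vee z)\wedge x \;=\; ((x\wedge y)\vee(x\wedge z))\wedge x,
\]
and transitivity of equality gives \eqref{GMDe}. Dually, assuming \eqref{GJD}, the lemma yields \eqref{GMDd}, whose outer equality is precisely \eqref{GMDf}.

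There is no real obstacle here; the only thing to be careful about is not to reprove the lemma. The corollary is presented separately presumably because \eqref{GMDe} and \eqref{GMDf} are the forms most useful in later sections (they express a kind of ``commutativity'' of the one-sided absorption-style distributive expressions), whereas the middle term $x\wedge(y\vee z)\wedge x$ is the redundant scaffolding that can be discarded once the equivalence of \eqref{GMD} and \eqref{GMDc} has been established.
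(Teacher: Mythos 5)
Your proposal is correct and matches the paper's (implicit) argument exactly: the paper states this corollary without proof precisely because \eqref{GMDe} and \eqref{GMDf} are just the outer equalities of the triple identities \eqref{GMDc} and \eqref{GMDd} established in the preceding lemma. Nothing further is needed.
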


\begin{theorem}\label{lindist} 
For all skew lattices, \eqref{GMDe} and \eqref{GMDf} are equivalent with a skew lattice satisfying either and hence both if and only if it is linearly distributive.
\end{theorem}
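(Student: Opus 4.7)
The overall plan is to reduce to the one-handed case via the Second Decomposition Theorem. Since \eqref{GMDe}, \eqref{GMDf}, and linear distributivity are all varietal (the last by the first theorem of this section), $\mathbf S$ satisfies any of these three conditions iff both $\mathbf S/\RR$ and $\mathbf S/\LL$ do. The left- and right-handed cases are $\wedge$-$\vee$ dual, so it suffices to establish \eqref{GMDe} $\Leftrightarrow$ linearly distributive in the left-handed case; the dual argument then gives \eqref{GMDf} $\Leftrightarrow$ linearly distributive in the right-handed case, and transitivity yields \eqref{GMDe} $\Leftrightarrow$ \eqref{GMDf}.

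In the left-handed case, I first simplify \eqref{GMDe}. Using $x\wedge y\wedge x=x\wedge y$ one checks that $x\wedge y,\ x\wedge z\leq x$ in the natural partial order, so their join also satisfies $\leq x$. Hence $((x\wedge y)\vee(x\wedge z))\wedge x=(x\wedge y)\vee(x\wedge z)$, and \eqref{GMDe} reduces to
\[ (\ast)\quad x\wedge((y\wedge x)\vee(z\wedge x)) = (x\wedge y)\vee(x\wedge z). \]
The easy direction $(\ast)\Rightarrow$ linearly distributive follows by substituting $z\mapsto c\wedge y$ in $(\ast)$: the resulting identity $x\wedge((y\wedge x)\vee(c\wedge y\wedge x))=(x\wedge y)\vee(x\wedge c\wedge y)$ is exactly the characterizing identity \eqref{LLD} of Lemma \ref{RLD}.

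For the converse, linearly distributive $\Rightarrow(\ast)$, the plan is to instantiate \eqref{LLD} with $b=y\vee z$ and $c\in\{y,z\}$, using the left-handed absorptions $z\wedge(y\vee z)=z$ and $y\wedge(y\vee z)=y$ (derivable from \eqref{absidentities} and the left-handed identity $y\wedge u\wedge y=y\wedge u$) to obtain the comparable-argument distributions
\[ x\wedge\bigl(((y\vee z)\wedge x)\vee(z\wedge x)\bigr) = (x\wedge(y\vee z))\vee(x\wedge z), \]
\[ x\wedge\bigl(((y\vee z)\wedge x)\vee(y\wedge x)\bigr) = (x\wedge(y\vee z))\vee(x\wedge y). \]
The main obstacle is to combine these---together with further applications of \eqref{LLD} and regularity \eqref{reg}---to eliminate the common dominating term $(y\vee z)\wedge x$ and reach $(\ast)$, which concerns the potentially incomparable pair $y\wedge x,\, z\wedge x$. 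Bridging this gap by a judicious chain of substitutions is where the real computational content of the proof lies.
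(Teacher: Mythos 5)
Your setup (reduction to the one-handed cases via the fibered product) and your forward direction are fine: substituting $z\mapsto c\wedge y$ into your identity $(\ast)$ (which is \eqref{GMDg}) does recover \eqref{LLD}, and Lemma \ref{RLD} then gives linear distributivity; this is essentially the paper's argument in different clothing. The problem is the converse, which is where the entire difficulty of the theorem sits, and which you have not proved: you explicitly defer ``the real computational content'' to an unspecified chain of substitutions. That is a genuine gap, not a routine verification. Worse, your proposed entry point --- instantiating \eqref{LLD} with $b=y\vee z$ and $c\in\{y,z\}$ --- drags in the term $(y\vee z)\wedge x$, which lives in a $\DD$-class \emph{above} both $y\wedge x$ and $z\wedge x$, and there is no apparent mechanism for eliminating it afterwards; linear distributivity gives you no purchase on joins of incomparable elements, which is exactly what $(\ast)$ is about.

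The missing idea is to use left-handedness \emph{self-referentially} to turn the incomparable pair $y\wedge x,\ z\wedge x$ into a $\succeq$-chain. By \eqref{LH}, $(y\wedge x)\vee(z\wedge x)=(z\wedge x)\vee(y\wedge x)\vee(z\wedge x)$, and now $x\succeq (z\wedge x)\vee(y\wedge x)\succeq z\wedge x$ is totally preordered, so linear distributivity (via Lemma \ref{LRdist}) applies to the outer join and yields
\[
x\wedge\bigl((y\wedge x)\vee(z\wedge x)\bigr)=\bigl(x\wedge((z\wedge x)\vee(y\wedge x))\bigr)\vee(x\wedge z\wedge x).
\]
Repeating the same trick on the first summand (writing $(z\wedge x)\vee(y\wedge x)=(y\wedge x)\vee(z\wedge x)\vee(y\wedge x)$) peels off $x\wedge y\wedge x$ as well, leaving $\bigl(x\wedge((y\wedge x)\vee(z\wedge x))\bigr)\vee(x\wedge y\wedge x)\vee(x\wedge z\wedge x)$; the leading term is then absorbed because it is $\LL$-related to $(x\wedge y)\vee(x\wedge z)$ and $u\vee v=v$ for $\LL$-related elements of a left-handed algebra. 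This gives $(\ast)$. A secondary point: you invoke only $\wedge$--$\vee$ duality, which converts ``left-handed: \eqref{GMDe} iff linearly distributive'' into ``right-handed: \eqref{GMDf} iff linearly distributive''; to run your fibered-product argument for \eqref{GMDe} alone you also need the statement for \eqref{GMDe} on right-handed algebras, which requires the left--right mirror duality (reversal of all products), under which \eqref{GMDe} is self-dual.
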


\begin{proof}
We begin with left-handed skew lattices. 
By Lemma \ref{RLD} we need only consider the case where $a \succ b \succ c$. 
The identity \eqref{GMDe} gives us the middle equality in the chain of equalities:
$$a\wedge (b\vee c) = a\wedge ((b\wedge a)\vee (c\wedge a)) = ((a\wedge b)\vee (a\wedge c))\wedge a = (a\wedge b)\vee (a\wedge c).$$
Thus $x\wedge (y \vee  z) = (x\wedge y) \vee (x\wedge z)$ holds in all totally preordered contexts in left-handed skew lattices satisfying \eqref{GMDe}. 
In such symmetrical contexts, the dual $(z \wedge y)\vee x = (z\wedge x) \vee (y\wedge x)$ also holds making the involved skew lattice linearly distributive. In dual fashion, right-handed skew lattices satisfying \eqref{GMDe} are also linearly distributive. 
Since any skew lattice $\mathbf S$ is embedded in the direct product $S/\RR \times S/\LL$, every skew lattice satisfying \eqref{GMDe} is linearly distributive. Conversely assume that  $\mathbf S$ is linear distributive. First, let $\mathbf S$ be left-handed. Then

\begin{center}
$
\begin{array}{lcl}
x \wedge ((y\wedge x) \vee (z\wedge x)) &=_{\eqref{LH}}& x \wedge ((z\wedge x) \vee (y\wedge x) \vee (z\wedge x)) \\
&=& (x \wedge ((z\wedge x) \vee (y\wedge x))) \vee (x \wedge (z\wedge x))  \\
&=_{\eqref{LH}}& (x \wedge ((y\wedge x) \vee (z\wedge x) \vee (y\wedge x))) \vee (x \wedge (z\wedge x)) \\
&=& (x \wedge ((y\wedge x) \vee (z\wedge x))) \vee (x \wedge (y\wedge x)) \vee (x \wedge (z\wedge x)) \\
&=& (x \wedge  (y\wedge x)) \vee (x \wedge (z\wedge x)) \\
&=_{\eqref{LH}}& (x\wedge y)\vee (x\wedge z) \\
&=& ((x\wedge y)\vee (x\wedge z))\wedge x.
\end{array}
$
\end{center}

\noindent Here the second and fourth equalities follow from linear distributivity. 
The fifth equality is again left-handedness upon observing that $x \wedge ((y\wedge x) \vee (z\wedge x))$ and $(x \wedge (y\wedge x)) \vee (x \wedge (z\wedge x))$ are $\LL$-related (look at $S/\DD = S/\LL$). 
The final equality follows from the fact that $x \geq (x\wedge y) \vee (x\wedge z)$ in the left-handed case. 
Thus \eqref{GMDe} holds. Similarly \eqref{GMDe} holds for linearly distributive, right-handed skew lattices. 
Again the embedding $S \rightarrow S/\RR \times S/\LL$ guarantees that all linearly distributive skew lattices satisfy \eqref{GMDe}. 
Thus linear distributivity is characterized by \eqref{GMDe}. The dual argument gives a characterization by \eqref{GMDf}. 
\end{proof}

\begin{corollary}
For left- and right-handed skew lattices, \eqref{GMDe} reduces respectively to
\begin{equation}\label{GMDg}\tag{3.5L}
x\wedge ((y\wedge x)\vee (z\wedge x)) = (x\wedge y)\vee (x\wedge z)\text{  and}
\end{equation}
\begin{equation}\label{GMDh}\tag{3.5R} 
((x\vee y)\wedge (x\vee z))\vee x = (y\wedge x)\vee (z\wedge x)
\end{equation}
\end{corollary}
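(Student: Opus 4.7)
The plan is to show that in a left-handed skew lattice the right-hand side of \eqref{GMDe} collapses, while in a right-handed skew lattice the left-hand side collapses, yielding the respective reductions. The key tool is a handedness-specific characterization of the natural preorder.

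First I would observe that in a left-handed skew lattice the identity $a\wedge x\wedge a = a\wedge x$ from \eqref{LH}, combined with the preorder definition $x\succeq a \Leftrightarrow a\wedge x\wedge a = a$ in \eqref{pre}, gives $x\succeq a \Leftrightarrow a\wedge x = a$; dually, in a right-handed skew lattice one gets $x\succeq a \Leftrightarrow x\wedge a = a$. Both characterizations are immediate. Next I would record two general facts: that $x\succeq x\wedge y$ and $x\succeq y\wedge x$ hold in every skew lattice (by \eqref{pre}), and that $x\succeq b$ together with $x\succeq c$ imply $x\succeq b\vee c$, since $\succeq$ projects to the natural order on the maximal lattice image $S/\DD$, where $\DD_{b\vee c} = \DD_b\vee \DD_c\leq \DD_x$. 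Consequently $x\succeq (x\wedge y)\vee (x\wedge z)$ and $x\succeq (y\wedge x)\vee (z\wedge x)$ in any skew lattice.

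Applying the left-handed characterization with $a = (x\wedge y)\vee (x\wedge z)$ collapses the right-hand side of \eqref{GMDe} to $(x\wedge y)\vee (x\wedge z)$, so \eqref{GMDe} reduces to \eqref{GMDg}. Dually, applying the right-handed characterization with $a = (y\wedge x)\vee (z\wedge x)$ collapses the left-hand side of \eqref{GMDe} to $(y\wedge x)\vee (z\wedge x)$, yielding \eqref{GMDh}. Since the argument rests only on these preorder characterizations and routine absorption facts, I do not foresee any significant obstacle; the one piece of care required is to invoke the correct handedness identity on the correct side of \eqref{GMDe}.
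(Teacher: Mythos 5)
Your argument is correct and is precisely the immediate reduction the paper has in mind (the corollary is stated there without proof): from $x\succeq x\wedge y,\ x\wedge z,\ y\wedge x,\ z\wedge x$ and the fact that $\succeq$ descends to the lattice $S/\DD$ one gets $x\succeq (x\wedge y)\vee(x\wedge z)$ and $x\succeq (y\wedge x)\vee(z\wedge x)$, and the handed forms of \eqref{pre} via \eqref{LH} and \eqref{RH} then collapse the appropriate side of \eqref{GMDe}. One caveat: your right-handed computation actually yields $((x\wedge y)\vee (x\wedge z))\wedge x = (y\wedge x)\vee (z\wedge x)$, whereas the printed \eqref{GMDh} has left-hand side $((x\vee y)\wedge (x\vee z))\vee x$; the printed form cannot be the reduction of \eqref{GMDe} (its left side lies above $x$ in the preorder while the right side lies below), so it is evidently a typographical dualization and the identity you derived is the correct one --- just be aware that what you proved is not literally the displayed \eqref{GMDh}.
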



\section{Midpoints and Distributive Skew Chains}
\label{Midpoints and Distributive Skew Chains}

A skew lattice is linearly distributive if and only if each skew chain of $\DD$-classes in it is distributive. 
In this section we characterize distributive skew chains in terms of the natural partial order. 
Given a skew chain $A>B>C$ where $A$, $B$ and $C$ are $\DD$-classes, with $a \in A$, $c \in C$ such that $a > c$, any element $b \in B$ such that $a > b > c$ is called a \emph{midpoint} in $B$ of $a$ and $c$. We begin with several straightforward assertions.

\begin{lemma}\label{midpoint}
Given a skew chain $A > B > C$, with $a > c$ for all $a \in A$ and $c \in C$:
\begin{itemize}
\item[(i)] For all $b \in B$, $a\wedge (c\vee b\vee c)\wedge a$ and $c\vee (a\wedge b\wedge a)\vee c$ are midpoints in $B$ of $a$ and $c$.
\item[(ii)] If $b$ in $B$ is a midpoint of $a$ and $c$, then both midpoints in (i) reduce to $b$.
\item[(iii)] When $A > B > C$ is a distributive skew chain, both midpoints in (i) agree:
\begin{equation}\label{distineq}\tag{4.1} a > a\wedge (c\vee b\vee c)\wedge a = c\vee (a\wedge b\wedge a)\vee c > c. \end{equation}
\end{itemize}
\end{lemma}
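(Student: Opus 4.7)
The plan is to dispatch (i), (ii), (iii) in turn, leaning heavily on the remark at the end of Section \ref{Background} that produces comparable representatives in adjacent $\DD$-classes.

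For (i), I would show first that the candidate $b' := a\wedge (c\vee b\vee c)\wedge a$ belongs to the $\DD$-class $B$. Since $b\succ c$, the cited construction gives $c\vee b\vee c \in B$ with $c\vee b\vee c \geq c$; then since $a\succ c\vee b\vee c$, a second application of the same construction gives $a \wedge (c\vee b\vee c) \wedge a \in B$ with $a \geq b'$. To see $b' \geq c$, I would compute
\[ b'\wedge c = a\wedge (c\vee b\vee c)\wedge a\wedge c = a\wedge (c\vee b\vee c)\wedge c = a\wedge c = c, \]
using $a\wedge c = c$ (from $a > c$) and the absorption $(c\vee y)\wedge c = c$ from \eqref{absidentities}; the identity $c\wedge b' = c$ is dual. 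Hence $a \geq b' \geq c$ with $b' \in B$, so $b'$ is a midpoint. The assertion for $c\vee (a\wedge b\wedge a)\vee c$ follows by the $\wedge$--$\vee$ dual of this argument.

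For (ii), if $b$ is already a midpoint then $a > b > c$ in the natural partial order, so by \eqref{poset} we have $a\wedge b = b = b\wedge a$ and $b\vee c = b = c\vee b$. Therefore $a\wedge b\wedge a = b$ and $c\vee b\vee c = b$, and substituting reduces both expressions in (i) to $b$.

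For (iii), I would apply \eqref{GMD} twice. First, with $y \rightarrow c\vee b$ and $z \rightarrow c$, we get
\[ a\wedge (c\vee b\vee c)\wedge a = \bigl(a\wedge (c\vee b)\wedge a\bigr) \vee (a\wedge c\wedge a). \]
Because $a > c$, the term $a\wedge c\wedge a$ simplifies to $c$. A second application of \eqref{GMD}, this time with $y\rightarrow c$, $z \rightarrow b$, gives $a\wedge (c\vee b)\wedge a = c \vee (a\wedge b\wedge a)$. Combining yields
\[ a\wedge (c\vee b\vee c)\wedge a = c\vee (a\wedge b\wedge a)\vee c, \]
as claimed. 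The strict inequalities in \eqref{distineq} come for free from part (i): the common value lies in $B$, and $A$, $B$, $C$ are distinct $\DD$-classes of the skew chain.

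The only part requiring any care is (i); the two other parts reduce to direct identity manipulations once the ambient $\DD$-class of the constructed element is identified. I expect the main subtlety to be keeping straight which absorption identities from \eqref{absidentities} and which $\DD$-class stability facts are being used in (i), but no deeper idea should be needed.
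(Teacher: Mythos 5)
Your proof is correct; the paper itself offers no proof of this lemma (it is introduced as a list of ``straightforward assertions''), and your argument is precisely the direct verification the authors intend: the comparable-representative construction from the end of Section \ref{Background} to place the candidates in $B$, absorption from \eqref{absidentities} for the order relations in (i), substitution via \eqref{poset} for (ii), and two applications of \eqref{GMD} for (iii). The only point worth making explicit in (i) is that strictness of $a > b' > c$ follows because $b'$ lies in $B$, which is a $\DD$-class distinct from $A$ and $C$ — a remark you do supply, though only later in (iii).
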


Midpoints provide a key to determining the effects of \eqref{GMD} and \eqref{GJD} in this context. 
To proceed further, we recall several concepts. Given a skew chain $A > B > C$, an \emph{$A$-coset in $B$} is any subset of $B$ of the form $A\wedge b\wedge A = \set{a\wedge b\wedge a' \mid a, a'\in A}$ for some $b$ in $B$. 
Given two $A$-cosets in $B$, they are either identical or else disjoint. 
Since $b \in A\wedge b\wedge A$ for all $b$ in $B$, the $A$-cosets in $B$ form a partition of $B$. 
Dually a \emph{$B$-coset in $A$} is a subset of $A$ of the form $B\vee a\vee B = \set{b\vee a\vee b' \mid b, b'\in B}$ for some $a$ in $A$. 
Again, the $B$-cosets in $A$ form a partition of $A$. Given a $B$-coset $X \subseteq A$ and an $A$-coset $Y \subseteq B$, a \emph{coset bijection} $\varphi: X\rightarrow Y$ is given by $\varphi(a) = b$ if $a \in X$, $b\in Y$ and $a>b$. 
Alternatively, $\varphi(a) = a\wedge b\wedge a$ and, dually, $\varphi^{-1}(b) = b\vee a\vee b$ for all $a \in X$ and all $b \in Y$ . 
Cosets are rectangular subalgebras in their $\DD$-classes and all coset bijections are isomorphisms. Thus all $A$-cosets in $B$ and all $B$-cosets in $A$ have a common size, denoted by $\omega[A,B]$. If $a, a' \in A$ lie in a common $B$-coset, this is denoted by $a -_{B} a'$; likewise $b -_{A} b'$ if $b$ and $b'$ lie in a common $A$-coset in $B$. This is illustrated in the partial configuration below where the dashed lines indicate $>$ between $a$'s and $b$'s while the full lines represent $\DD$-related elements.  

\begin{center}
\begin{tikzpicture}[scale=1.3]

  \node (A) at (0,1){$A:$} ;
  \node (B) at (0,0){$B:$} ;
  \node (a1) at (1,1){$a_{1}$} ;
  \node (a2) at (2,1){$a_{2}$};
  \node (a3) at (3,1){$a_{3}$}  ;
  \node (a4) at (4,1){$a_{4}$} ;
    \node (b1) at (2,0){$b_{1}$} ;
  \node (b2) at (3,0){$b_{2}$};
  
    \draw[dotted] (a1) -- (b1);
    \draw[dotted] (a2) -- (b2);
    \draw[dotted] (a3) -- (b1);
    \draw[dotted] (a4) -- (b2);
    
    \draw (a1) -- (a2) node[pos=.5,below] {$B$};
    \draw (a3) -- (a4) node[pos=.5,below] {$B$};
    \draw (b1) -- (b2) node[pos=.5,below] {$A$};
    
\end{tikzpicture}
\end{center}

Binary outcomes between elements in $A$ and $B$ are given by, e.g., $a\wedge b =\varphi (a)\wedge b$ in $B$ and $a\vee b =a\vee \varphi^{-1}(b)$ in $A$ using the relevant coset bijection $\varphi: B\vee a\vee B \rightarrow A\wedge b\wedge A$. (For more details see \cite{Le93} and \cite{Le96} or remarks in \cite{CAT}.)

Similarly there are $A$-cosets in $C$, $C$-cosets in $A$, $B$-cosets in $C$ and $C$-cosets in $B$. 
The $C$-coset decomposition of $A$ refines the $B$-coset decomposition of $A$; similarly $B$-cosets in $C$ are refined by $A$-cosets in $C$. 
Our interest is in the middle class $B$ of the skew chain. Elements $b$ and $b'$ in $B$ are \emph{$AC$-connected} if a finite sequence $b = b_{0}, b_{1}, b_{2}, \dots , b_{n} =b'$ exists in $B$ such that $b_{i} -_{A} b_{i+1}$ or $b_{i} -_{C} b_{i+1}$ for all $i\leq n-1$. 
A maximally $AC$-connected subset of $B$ is an \emph{$AC$-component} of $B$ (or just \emph{component} if the context is clear). 
$B$ is a disjoint union of all its $AC$-components and every $AC$-component in $B'$ is the disjoint union of all $A$-cosets in $B$ that are subsets of $B'$ and the disjoint union of all $C$-cosets in $B$ that are subsets of $B'$, as well as the disjoint union of all the $AC$-cosets in $B'$.  
$AC$-connectedness is a congruence relation on $B$. Its congruence classes, the components, are thus subalgebras of $B$. 
Given a component $B'$ of $B$, a sub-skew chain is given by $A > B' > C$. 
Since $a\wedge (c\vee b\vee c)\wedge a$ is the same for all $b$ in a common $C$-coset and $c\vee (a\wedge b\wedge a)\vee c$ is the same for all $b$ in a common $A$-coset, we can extend Lemma \ref{midpoint} as follows:

\begin{lemma}
Given a distributive skew chain $A > B > C$, for any pair $a > c$ where $a \in A$ and $c \in C$, each $AC$-component $B'$ of $B$ contains a unique midpoint $b$ of $a$ and $c$.  
\end{lemma}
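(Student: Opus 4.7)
The plan is to exhibit the midpoint explicitly using the constructions from Lemma~\ref{midpoint}(i). Define the map $m \colon B \to B$ by
\[ m(b) \;=\; a \wedge (c \vee b \vee c) \wedge a. \]
By Lemma~\ref{midpoint}(i), $m(b)$ is always a midpoint of $a$ and $c$ in $B$, and by Lemma~\ref{midpoint}(iii), distributivity of the skew chain additionally gives the alternate expression $m(b) = c \vee (a \wedge b \wedge a) \vee c$.

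The two observations recorded immediately before the statement tell us that $m$ is constant on each $C$-coset of $B$ (via the first formula) and constant on each $A$-coset of $B$ (via the second). Since the $AC$-component $B'$ is by definition the equivalence class of the congruence generated by $-_{A}$ and $-_{C}$, it follows at once that $m$ is constant on $B'$; call its common value $b^{*}$.

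Next I would verify that $b^{*} \in B'$. Pick any $b \in B'$. Then $a \wedge b \wedge a$ lies in the $A$-coset $A \wedge b \wedge A$, so $a \wedge b \wedge a \; -_{A}\; b$, placing $a \wedge b \wedge a$ in $B'$. Then $b^{*} = c \vee (a \wedge b \wedge a) \vee c$ lies in the $C$-coset $C \vee (a \wedge b \wedge a) \vee C$, so $b^{*} \; -_{C}\; a \wedge b \wedge a$, and hence $b^{*} \in B'$. This delivers the required midpoint in $B'$.

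Uniqueness falls out of Lemma~\ref{midpoint}(ii): any midpoint $b' \in B'$ of $a$ and $c$ satisfies $m(b') = b'$ by that part, while $m(b') = b^{*}$ by constancy of $m$ on $B'$, so $b' = b^{*}$. There is no serious obstacle beyond tracking which coset relations each formula preserves; the essential work has been done in Lemma~\ref{midpoint} and in the coset-invariance observation immediately preceding the statement, and the proof amounts to assembling these pieces.
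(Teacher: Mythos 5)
Your proposal is correct and follows exactly the route the paper intends: the paper's entire justification is the one sentence preceding the lemma, namely that $a\wedge(c\vee b\vee c)\wedge a$ depends only on the $C$-coset of $b$ and $c\vee(a\wedge b\wedge a)\vee c$ only on the $A$-coset of $b$, so that by Lemma~\ref{midpoint}(iii) the common value is constant on each $AC$-component, with existence and uniqueness then coming from parts (i) and (ii). Your write-up simply makes these steps (including the check that the common value actually lies in $B'$) explicit, which is a faithful expansion of the same argument.
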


Given cosets $X \subseteq A$ and $Y \subseteq B$ as above, a coset bijection $\varphi:X\rightarrow Y$ can be viewed as a partial bijection between the involved $\DD$-classes, $\varphi:A\rightarrow B$. Recall that a skew lattice $\mathbf S$ is \emph{categorical} if for all skew chains $A > B > C$ of $\DD$-classes in $\mathbf S$, nonempty composites $\psi\circ \varphi$ of coset bijections $\varphi$ from $A$ to $B$ and $\psi$ from $B$ to $C$ are coset bijections from $A$ to $C$. In this case, adjoining empty partial bijections to account for empty compositions and identity bijections on $\DD$-classes, one obtains a category with $\DD$-classes for objects, coset bijections for morphisms, and the composition of partial functions for composition (see \cite{CAT}, \cite{JPC11} or \cite{JPC12} for more details). Clearly, a skew chain $A>B>C$ is categorical if and only if $A > B' > C$ is categorical for each component $B'$. Categorical skew lattices form a variety (see \cite{Le93}, Theorem 3.16). We also have:

\begin{theorem}[\cite{CAT}, Theorem 2.3] 
A skew lattice $\mathbf S$ is categorical if and only if for all $x,y, z \in S$.
\begin{equation}\label{cateq}\tag{4.2} 
x\geq y \succeq z \text{  implies  } x \wedge (z \vee y \vee z) \wedge x = (x \wedge z \wedge x) \vee y \vee (x \wedge z \wedge x) 
\end{equation}
\end{theorem}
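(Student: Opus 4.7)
Plan: I prove both implications of the equivalence, using the interpretation of the operations in a skew chain via coset bijections.

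$(\Rightarrow)$ Assume $\mathbf{S}$ is categorical. Let $x \ge y \succeq z$ with $\DD$-classes $A$, $B$, $C$; reduce to the nontrivial case $A > B > C$. Set $u := z \vee y \vee z$ and $z' := x \wedge z \wedge x$; then $u$ is the unique element of $B$ with $u \ge z$ and $u \DD y$, and $z'$ is the unique element of $C$ with $z' \le x$ and $z' \DD z$, so \eqref{cateq} reads $x \wedge u \wedge x = z' \vee y \vee z'$. Consider the coset bijections $\alpha\colon A\to B$ with $\alpha(x) = y$ (using $x \ge y$) and $\beta\colon B\to C$ with $\beta(u)=z$ (using $u \ge z$). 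The composite $\beta\circ\alpha$ is nonempty at $x$: indeed, $y$ lies in the range of $\alpha$ (the $A$-coset of $y$ in $B$) and also in the domain of $\beta$ (the $C$-coset of $u$ in $B$, which coincides with the $C$-coset of $y$ since $u \in C\vee y\vee C$). By categoricity, $\beta\circ\alpha$ is a coset bijection $\gamma\colon A\to C$ with $\gamma(x)=\beta(y)=y\wedge z\wedge y$. I would then use the fact that both $x\wedge u\wedge x$ and $z'\vee y\vee z'$ are midpoints of $x$ and $y\wedge z\wedge y$ in $B$, combined with the uniqueness of the midpoint sitting in the intersection of the $A$-coset of $u$ and the $C$-coset of $y$ (forced by the coset-bijection status of $\gamma$), to conclude that the two midpoints coincide.

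$(\Leftarrow)$ Assume \eqref{cateq} holds for all $x \ge y \succeq z$. To establish categoricity, take a skew chain $A>B>C$ and any nonempty composable coset bijections $\alpha\colon X_1\to Y_1$ (from $A$ to $B$) and $\beta\colon X_2\to Y_2$ (from $B$ to $C$). Fix $y_0\in Y_1\cap X_2$ and set $x_0=\alpha^{-1}(y_0)$, $z_0=\beta(y_0)$, so that $x_0\ge y_0\ge z_0$. Apply \eqref{cateq} to triples $(x,y,z_0)$, where $x$ ranges over the $B$-coset of $x_0$ in $A$ and $y=\alpha(x)$ (with $y\succeq z_0$ automatic because $y \DD y_0 \ge z_0$); the identity then pins down $\beta(y)=y\wedge z_0\wedge y$ and $x \wedge z_0 \wedge x$ as lying in a common $A$-coset of $C$, and a symmetric argument on the $A$-side does the same for the preimage. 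Piecing these pointwise identifications together shows that $\beta\circ\alpha$ agrees with the coset bijection $\gamma\colon A\to C$ determined by $(x_0,z_0)$.

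The main obstacle is the midpoint identification in the forward direction: extracting, from the structural equality $\beta\circ\alpha=\gamma$, the element-level equality of the two midpoints described via the $A$-coset of $u$ and the $C$-coset of $y$. As a technical simplification, one may first reduce to the left- and right-handed cases via the embedding $\mathbf S \hookrightarrow \mathbf S/\RR \times \mathbf S/\LL$, where the coset bijection formulas become much more transparent and both sides of \eqref{cateq} can be brought to a common normal form.
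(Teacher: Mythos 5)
This theorem is quoted from \cite{CAT} (Theorem 2.3 there) and the present paper gives no proof of it, so there is nothing internal to compare your argument against; I can only judge it on its own terms, and on those terms the forward direction contains a genuine error, not merely the unfinished step you flag. You claim that both $x \wedge (z \vee y \vee z) \wedge x$ and $(x \wedge z \wedge x) \vee y \vee (x \wedge z \wedge x)$ are midpoints of $x$ and $\gamma(x) = y \wedge z \wedge y$. This is false, and the paper's own Example \ref{nlindist} refutes it: in $\mathbf U_{2}$ take $x = a_{1}$, $y = b_{1}$, $z = c_{2}$, so that $x \geq y \succeq z$. Then $y \wedge z \wedge y = b_{1} \wedge c_{2} \wedge b_{1} = c_{1}$, while $z \vee y \vee z = c_{2} \vee b_{1} \vee c_{2} = d_{3}$ and both sides of \eqref{cateq} equal $d_{3}$; but $d_{3} > c_{2}$ only, so $d_{3} \not> c_{1}$ and the common value is not a midpoint of $a_{1}$ and $c_{1}$. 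The structural reason the claim cannot hold is that an element of $B$ lies above exactly one element of each $B$-coset in $C$: the right-hand side lies above $x \wedge z \wedge x$, and $x \wedge z \wedge x$ and $y \wedge z \wedge y$ lie in the same $B$-coset of $C$ but are in general distinct (they may occupy different $A$-cosets of $C$), so the right-hand side cannot also lie above $y \wedge z \wedge y$. The correct identification is of a different kind: the left side is the unique element of the $A$-coset of $z \vee y \vee z$ below $x$, the right side is the unique element of the $C$-coset of $y$ above $x \wedge z \wedge x$, and what categoricity must deliver is that each of these lies in the other's coset --- that is exactly the content of the theorem, so invoking ``uniqueness of the midpoint'' here is circular as well as unavailable.

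The converse direction is a plan rather than a proof. You assert that \eqref{cateq} ``pins down $\beta(y) = y \wedge z_{0} \wedge y$ and $x \wedge z_{0} \wedge x$ as lying in a common $A$-coset of $C$,'' but \eqref{cateq} is an equation between elements of the middle class $B$, and you give no derivation of how it yields the needed coset statements in $C$ (nor the dual ones in $A$ showing that the domain of $\beta \circ \alpha$ is a full $C$-coset of $A$ and its range a full $A$-coset of $C$). The one sound structural idea is the reduction to the handed cases via $S \hookrightarrow S/\RR \times S/\LL$ (legitimate because categorical skew lattices form a variety and \eqref{cateq} lifts along the quotients); in the left-handed case \eqref{cateq} collapses to $x \wedge (y \vee z) = y \vee (x \wedge z)$ for $x \geq y \succeq z$, and an honest proof of both implications should be carried out there and dualized, replacing the midpoint argument entirely.
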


Thus linearly distributive skew lattices are categorical. The converse, however, does not hold (see Example \ref{nlindist} below). 
It does hold, however, for \emph{strictly categorical} skew lattices where, in addition, for every chain of $\DD$-classes $A > B > C$ each $A$-coset in $B$ has nonempty intersection with each $C$-coset in $B$, making of $B$ a single $AC$-component. 
Strictly categorical skew lattices form a variety (see \cite{CAT}, Corollary 4.3). This class includes:

\begin{itemize}
\item[(a)] \emph{Normal} skew lattices characterized by the condition $x\wedge y\wedge z\wedge w = x\wedge z\wedge y\wedge w$, or equivalently, every subset $[e]\downarrow = \set{x \in S\mid e \geq x}$ is a sublattice (see \cite{Le92}). Skew Boolean algebras are normal as skew lattices.
\item[(b)] \emph{Primitive} skew lattices consisting of two $\DD$-classes, $A > B$, and all skew lattices in the subvariety generated from this class of skew lattices.
\end{itemize}

\begin{theorem}[\cite{CAT}, Theorem 4.2]\label{strict} 
The following conditions on a skew lattice $\mathbf S$ are equivalent:
\begin{itemize}
\item[(i)] $\mathbf S$ is strictly categorical.
\item[(ii)] Given both $a > b > c$ and $a > b' > c$ in $S$ with $b \DD b'$, $b = b'$ follows.
\item[(iii)] Given $a > b$ in $S$, the subalgebra $[a, b] = \set{x\in S\mid a \geq x \geq b}$ is a sublattice.
\item[(iv)] $\mathbf S$ is categorical and given skew chain $A > B > C$ in $\mathbf S$, for each coset bijection $\chi: A \rightarrow C$ unique coset bijections $\varphi: A \rightarrow B$ and $\psi: B \rightarrow C$ exist such that $\chi=\psi \circ \varphi$. 
\end{itemize}
\end{theorem}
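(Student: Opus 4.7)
The plan is a cyclic proof $(i) \Rightarrow (ii) \Rightarrow (iii) \Rightarrow (iv) \Rightarrow (i)$. I would first handle the equivalence $(ii) \Leftrightarrow (iii)$, which is the cleanest link. For $(iii) \Rightarrow (ii)$: since $[a,c]$ is a sublattice it is a genuine lattice, and $\DD$ restricted to any lattice is the identity relation, so two $\DD$-related midpoints of $a$ and $c$ inside $[a,c]$ must coincide. For $(ii) \Rightarrow (iii)$: given $x,y \in [a,b]$, routine verifications show that $x \wedge y, y \wedge x \in [a,b]$, and $x \wedge y \DD y \wedge x$ in any skew lattice. In the generic case $a > x \wedge y > b$ and $a > y \wedge x > b$, condition $(ii)$ forces $x \wedge y = y \wedge x$; the degenerate cases where $x \wedge y$ collapses to $a$ or $b$ are dispatched by the fact that the natural partial order is trivial on each $\DD$-class. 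A dual argument gives $\vee$-commutativity, so $[a,b]$ is a commutative subalgebra, hence a lattice.

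For $(i) \Rightarrow (ii)$, I would work inside the $\DD$-class chain $A > B > C$ of $a, b, c$. The equation $b = a \wedge b \wedge a$ exhibits $b$ as the image of $a$ under the coset bijection from the $B$-coset $B \vee a \vee B \subseteq A$ to the $A$-coset $A \wedge b \wedge A \subseteq B$; dually, $b > c$ yields a coset bijection recovering $b$ from $c$. Since coset bijections are determined by their action on any single element, $b$ and $b'$ must agree whenever they share an $A$-coset or a $C$-coset in $B$. Strict categoricity provides nonempty intersections of every pair of $A$- and $C$-cosets in $B$, and the remaining task is to combine this with the categorical identity \eqref{cateq} (applied to elements witnessing any putative multi-element intersection) to upgrade ``nonempty intersection'' to ``singleton intersection'', forcing $b = b'$.

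For $(iii) \Rightarrow (iv)$: categoricity follows because \eqref{cateq} is automatic whenever its elements lie in a common interval $[a,z]$ that is a lattice; and any coset bijection $\chi$ from $A$ to $C$ with $\chi(a) = c$ admits the factorization through the unique midpoint $b \in [a,c] \cap B$ (which exists by Lemma \ref{midpoint}(i) and is unique by $(iii)$), whose uniqueness propagates to that of $\varphi$ and $\psi$. The implication $(iv) \Rightarrow (i)$ is direct: existence of such a factorization for every $\chi$ forces every $A$-coset in $B$ to meet every $C$-coset in $B$ (yielding a single $AC$-component), and categoricity is part of the hypothesis of $(iv)$. The main obstacle is the step $(i) \Rightarrow (ii)$, where one must extract genuine midpoint uniqueness from the purely coset-theoretic nonempty-intersection property of strict categoricity, presumably by exploiting \eqref{cateq} together with the rectangular-band structure of $B$.
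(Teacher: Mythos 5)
First, a point of comparison: the paper offers no proof of Theorem \ref{strict} at all --- it is imported verbatim from \cite{CAT}, Theorem 4.2 --- so there is no in-paper argument to measure yours against. On its own terms, your proposal is sound on the link (ii) $\Leftrightarrow$ (iii): the forward direction via closure of $[b,a]$ under $\wedge$ and $\vee$, the fact that $x\wedge y \mathbin{\DD} y\wedge x$, and triviality of $\geq$ on $\DD$-classes for the degenerate cases, and the reverse direction via triviality of $\DD$ on a lattice, are both correct and complete.

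The other links have genuine gaps. In (i) $\Rightarrow$ (ii) the target you set yourself --- upgrading ``every $A$-coset meets every $C$-coset in $B$'' to ``meets it in a singleton'' --- is both false and beside the point. It is false because the $AC$-cosets of a strictly categorical skew chain need not be singletons: take the normal (hence strictly categorical) skew chain $D\times L$ with $D$ a three-element chain and $L$ a two-element rectangular band, where $B$ is simultaneously a single $A$-coset and a single $C$-coset of size $2$. It is beside the point because two distinct midpoints $b,b'$ of $a$ and $c$ necessarily lie in \emph{different} $A$-cosets and different $C$-cosets (each $A$-coset contains exactly one image of $a$), hence in different $AC$-cosets, so singleton intersections would say nothing about them. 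What actually closes this step is the orthogonality property quoted in Section \ref{Midpoints and Distributive Skew Chains} (Theorem 4.1 of \cite{CAT}): all images of $a$ in a component lie in a single $C$-coset; since $b$ and $b'$ are both images of $a$ and $B$ is a single component, they share a $C$-coset, which contains at most one element above $c$. Your (iii) $\Rightarrow$ (iv) likewise rests on the unsubstantiated claim that the terms of \eqref{cateq} lie in a common interval that is a lattice; the terms $y$, $z\vee y\vee z$ and $x\wedge z\wedge x$ need not admit a common lower bound in the natural partial order when $y\succeq z$ is only a preorder relation, so categoricity requires a separate argument. Finally, (iv) $\Rightarrow$ (i) is not ``direct'': the unique factorization of a given $\chi$ exhibits one intersecting pair of an $A$-coset with a $C$-coset, but nothing in your sketch lets you force the factorization through an arbitrarily prescribed pair. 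The workable plan is to establish orthogonality first and route (i) $\Leftrightarrow$ (ii) and (iv) through it --- or to do as the paper does and simply cite \cite{CAT}.
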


Returning to distributive skew chains we have the following:

\begin{lemma}\label{distlema}
A left-handed, categorical skew chain $\mathbf S$ is distributive if and only if $a\wedge (b\vee c) = (a\wedge b) \vee (a\wedge c)$ for all $a \succ b \succ c$ such that $a > c$, in which case the identity reduces to $a\wedge (b\vee c) = (a\wedge b)\vee c$. 
Dually, a right-handed categorical skew chain $\mathbf S$ is distributive if and only if $(c\vee b)\wedge a = (c\wedge a)\vee (b\wedge a)$ for all $a \succ b \succ c$ such that $a > c$, in which case the identity reduces to $(c\vee b)\wedge a = c\vee (b\wedge a)$. (Note that these identities are the left and right-handed cases of \eqref{distineq} above.)
\end{lemma}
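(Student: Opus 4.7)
The plan is to prove each direction in turn. The forward direction is almost immediate: if $\mathbf{S}$ is distributive, Lemma \ref{LRdist} gives $a\wedge(b\vee c) = (a\wedge b)\vee(a\wedge c)$ for all $a, b, c$; when $a > c$ in the natural partial order, $a\wedge c = c$, so the right-hand side collapses to $(a\wedge b)\vee c$.

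For the reverse direction, assume the simpler identity on all triples $a\succ b\succ c$ with $a > c$, and aim to deduce the full distributive identity $a\wedge(b\vee c) = (a\wedge b)\vee(a\wedge c)$. By Lemma \ref{LRdist} and Lemma \ref{RLD}, it suffices to check this on arbitrary $a\succ b\succ c$ (without any partial-order hypothesis between $a$ and $c$). Given such a triple, set $c' := a\wedge c$; left-handedness together with \eqref{LH} gives $c'\in\DD_c$, $a > c'$, and $b\succ c'$. The hypothesis applied to $(a, b, c')$ now yields
\[ a\wedge(b\vee c') = (a\wedge b)\vee c' = (a\wedge b)\vee(a\wedge c), \]
so the task reduces to showing that $a\wedge(b\vee c) = a\wedge(b\vee c')$.

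The key maneuver is a second application of the hypothesis, this time to the triple $(a, b\vee c, c')$: indeed $a\succ b\vee c$ (as $b\vee c\in\DD_b$), $b\vee c\succ c'$, and $a > c'$, so the hypothesis gives
\[ a\wedge((b\vee c)\vee c') = (a\wedge(b\vee c))\vee c'. \]
Two simplifications then close the argument. First, since $c, c'\in\DD_c$ and $\mathbf{S}$ is left-handed, $c\vee c' = c'$, and therefore $(b\vee c)\vee c' = b\vee c'$; combining the previous two displays gives $(a\wedge(b\vee c))\vee c' = (a\wedge b)\vee c'$. Second, I will verify that $a\wedge(b\vee c)\geq c'$ in the partial order, using $b\vee c\geq c$ (which is standard from absorption plus \eqref{LH}) together with the left-handed identity $a\wedge x\wedge a = a\wedge x$; this reduces both $(a\wedge(b\vee c))\wedge c'$ and $c'\wedge(a\wedge(b\vee c))$ to $c'$. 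Consequently $(a\wedge(b\vee c))\vee c' = a\wedge(b\vee c)$, which combined with the previous computation yields $a\wedge(b\vee c) = (a\wedge b)\vee c' = (a\wedge b)\vee(a\wedge c)$. The right-handed dual is proved symmetrically, with $c' := a\wedge c$ replaced by $c'' := c\vee a\vee c$ throughout. The delicate step is the partial-order verification $a\wedge(b\vee c)\geq c'$; once that is in hand, the two applications of the hypothesis carry the rest.
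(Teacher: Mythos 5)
Your proof is correct, but the crucial reduction step is carried out by a genuinely different mechanism than the paper's. Both arguments reduce (via Lemmas \ref{LRdist} and \ref{RLD}) to a triple $a\succ b\succ c$, set $c'=a\wedge c$, and must in effect show $a\wedge (b\vee c)=a\wedge (b\vee c')$. The paper gets this directly from categoricity: $c$ and $c'$ lie in a common $A$-coset of $C$, so $b\vee c$ and $b\vee c'$ lie in a common $A$-coset of $B$, whence $a\wedge (b\vee c)=a\wedge (b\vee c')$. You instead apply the restricted hypothesis a second time, to the triple $(a,\ b\vee c,\ c')$, and combine it with $c\vee c'=c'$ (left-handedness inside $\DD_c$) and the order fact $a\wedge (b\vee c)\geq c'$; all of these check out, including your ``delicate'' step, which follows from $(b\vee c)\wedge c=c=c\wedge (b\vee c)$ together with the left-handed identity $x\wedge y\wedge x=x\wedge y$. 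What your route buys is that categoricity is never invoked: the restricted identity alone forces distributivity (and hence, a posteriori, categoricity) of a left-handed skew chain, which is slightly stronger than the stated lemma. What the paper's route buys is brevity and a direct link to the coset machinery that drives Theorem \ref{disteq}. One small slip: in the right-handed dual the element replacing $c'=a\wedge c$ should be $c\wedge a$, an element of $\DD_c$ lying below $a$; your $c\vee a\vee c$ lies in $\DD_a$, so it would be a substitute for $a$ rather than for $c$. With $c\wedge a$ in place of $c'$ the dual argument goes through exactly as you describe.
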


\begin{proof}
Given $a\succ b\succ c$ with respective $\DD$-classes $A>B>C$, let $c'=a\wedge c$. Then $a>c'$ and $(a\wedge b) \vee (a\wedge c) = (a\wedge b) \vee c'$. Next, since $c$ and $c'$ lie in the same $A$-coset in $C$ and $\mathbf S$ is categorical, both $b\vee c$ and $b\vee c'$ lie in the same $A$-coset in $B$ so that $a\wedge (b\vee c) = a\wedge (b\vee c')$. Hence $a\wedge (b\vee c) = (a\wedge b) \vee (a\wedge c)$ if and only if $a\wedge (b \vee c') = (a\wedge b) \vee (a\wedge c')$ where $a \succ b \succ c'$, $a > c'$ with the latter expression reducing to $(a\wedge b) \vee c'$ as stated. 
The lemma follows from Lemma \ref{LRdist} and left-right duality. 
\end{proof}

\begin{theorem}\label{disteq} 
Given a skew chain $A > B > C$, the following condition are equivalent:
\begin{itemize}
\item[(i)] $A > B > C$ is distributive.
\item[(ii)] For all $a \in A$, $b \in B$ and $c \in C$ with $a > c$, $$a\wedge (c\vee b\vee c)\wedge a = c\vee (a\wedge b\wedge a)\vee c.$$
\item[(iii)] Given $a \in A$ and $c \in C$ with $a > c$, each component $B'$ of $B$ contains a unique midpoint $b$ of $a$ and $c$. 
\item[(iv)] For each component $B'$ of $B$, $A>B'>C$ is strictly categorical.
\end{itemize}

When these conditions hold, each coset bijection $\chi: A \rightarrow C$ uniquely factors through each component $B'$ of $B$ in that unique coset bijections $\varphi: A \rightarrow B'$ and $\psi: B' \rightarrow C$ exist such that $\chi = \psi \circ \varphi$ under the usual composition of partial bijections.
\end{theorem}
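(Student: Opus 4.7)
The plan is to establish the cycle $(i) \Rightarrow (ii) \Rightarrow (iii) \Rightarrow (iv) \Rightarrow (i)$ and then derive the factorization claim as a direct consequence of Theorem \ref{strict}(iv) applied inside each strictly categorical sub-chain. The implication $(i) \Rightarrow (ii)$ is already contained in Lemma \ref{midpoint}(iii): distributivity forces the two midpoint expressions to agree whenever $a > c$. For $(ii) \Rightarrow (iii)$, I would invoke the observation made in the text preceding the theorem, namely that $a \wedge (c \vee b \vee c) \wedge a$ depends on $b$ only through its $C$-coset in $B$ while $c \vee (a \wedge b \wedge a) \vee c$ depends on $b$ only through its $A$-coset. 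Equality of the two expressions for every $b$ therefore forces the common midpoint to be constant on both $A$-cosets and $C$-cosets simultaneously, hence constant on each $AC$-component of $B$, and uniqueness of the midpoint within $B'$ follows.

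For $(iii) \Rightarrow (iv)$, I would first show that $A \cup B' \cup C$ is a sub-skew chain: $B'$ is a subalgebra of $B$ as a congruence class of $AC$-connectedness, and the operation of any $a \in A$ or $c \in C$ with $b \in B'$ lands in the $A$-coset or $C$-coset of $b$, both of which lie inside $B'$. By construction $B'$ is a single $AC$-component of this sub-chain, so the coset-intersection clause in the definition of strict categoricity is automatic. The remaining content of strict categoricity is the uniqueness-of-midpoint criterion of Theorem \ref{strict}(ii), which is exactly what $(iii)$ supplies. Note that although Theorem \ref{strict} is stated for whole skew lattices, its characterizations are local conditions on triples of $\DD$-classes, so their truth on the chain $A > B' > C$ is what is needed.

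For $(iv) \Rightarrow (i)$, I would appeal to the fact (stated in the discussion surrounding Theorem \ref{strcat}) that strictly categorical skew lattices are linearly distributive; since a skew chain is totally preordered, linear distributivity on it coincides with ordinary distributivity, so each $A > B' > C$ satisfies \eqref{GMD} and \eqref{GJD}. To promote these identities to the full chain $A > B > C$, one checks that for any $a \in A$, $b \in B$, $c \in C$, every subterm of \eqref{GMD} and \eqref{GJD} lies in $A \cup B' \cup C$, where $B'$ is the component of $b$: meeting $b$ with elements of $A$ or joining with elements of $C$ moves $b$ within its $A$-coset or $C$-coset, both inside $B'$. The final assertion about factorization of a coset bijection $\chi : A \to C$ then follows by invoking Theorem \ref{strict}(iv) in each strictly categorical sub-chain $A > B' > C$, together with the fact that every coset bijection $A \to C$ is determined by its action on a single pair $a > c$ and thus lands in a single component.

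The main obstacle will be the assembly step in $(iv) \Rightarrow (i)$: one must justify that every intermediate term appearing in \eqref{GMD} and \eqref{GJD} remains within the single component $B'$ determined by the chosen $b$, so that the identities on each sub-chain truly glue to the full chain. Once the coset bookkeeping is done, the rest of the cycle rests on already-established lemmas (Lemma \ref{midpoint}, Theorem \ref{strict}, and the strictly-categorical-implies-linearly-distributive fact), and the argument is essentially combinatorial.
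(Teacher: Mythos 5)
Your steps (i)$\Rightarrow$(ii)$\Rightarrow$(iii) and (iii)$\Leftrightarrow$(iv) track the paper's argument closely: the paper likewise gets (i)$\Rightarrow$(ii) from Lemma \ref{midpoint}, gets (ii)$\Leftrightarrow$(iii) from the coset-invariance of the two midpoint expressions, and gets (iii)$\Leftrightarrow$(iv) from Theorem \ref{strict}. Where you diverge is in closing the loop. The paper does not pass back through ``strictly categorical $\Rightarrow$ distributive'' at all: it uses (iv) only to conclude that $A>B>C$ is categorical, and then feeds condition (ii) into Lemma \ref{distlema} (whose displayed identity is exactly the left/right-handed form of the midpoint equality \eqref{distineq}) to show that $\mathbf S/\RR$ and $\mathbf S/\LL$ are distributive, whence $S\subseteq S/\RR\times S/\LL$ is. That route avoids any component-by-component gluing.

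Your route has a genuine circularity as written: the statement ``a strictly categorical skew lattice is linearly distributive'' is precisely the Corollary that the paper \emph{deduces from} Theorem \ref{disteq}, so it cannot be invoked to prove (iv)$\Rightarrow$(i). The repair is available but you must name it: apply Theorem \ref{strcat} (whose proof relies only on Theorem \ref{strict}, imported from \cite{CAT}, and not on Theorem \ref{disteq}) to each sub-chain $A>B'>C$, noting that a skew chain is automatically quasi-distributive since its lattice image is a three-element chain. A second, smaller gap is in the assembly step you flag yourself: the identities \eqref{GMD} and \eqref{GJD} must hold for \emph{all} triples from $A\cup B\cup C$, not only those with one variable in each class; in particular two variables may land in distinct $AC$-components of $B$, and then terms such as $y\vee z$ need not stay in either component. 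You need Lemma \ref{RLD} (applied to $S/\RR$ and $S/\LL$) to reduce to the configuration $x\in A$, $y\in B$, $z\in C$ before the containment-in-$A\cup B'\cup C$ bookkeeping applies. With those two repairs your argument goes through, but at that point the paper's use of Lemma \ref{distlema} together with (ii) is the more economical path.
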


\begin{proof}
(i) clearly implies (ii). 
Given $a > c$ in (ii), for each element $x$ in $B$, both $b_{1} = a\wedge (c\vee x\vee c)\wedge a$ and $b_{2} = c\vee (a\wedge x\wedge a)\vee c$ are midpoints of $a$ and $c$ in $B$. 
Replacing $x$ by any element in its $C$-coset, does not change the $b_{1}$-outcome. 
Likewise, replacing $x$ by any element in its $A$-coset, does not change the $b_{2}$-outcome. 
Hence (ii) is equivalent to asserting that given $a > c$ fixed, for all $x$ in a common $AC$-component $B'$ of $B$, both $a\wedge (c\vee x\vee c)\wedge a$ and $c\vee (a\wedge x\wedge a)\vee c$ produce the same output $b$ in $B'$ such that $a > b > c$. 
Conversely, for any $b$ in $B'$ such that $a > b > c$ we must have $a\wedge (c\vee b\vee c)\wedge a = b = c\vee (a\wedge b\wedge a)\vee c$. 
Thus (ii) and (iii) are equivalent. Their equivalence with (iv) follows from Theorem \ref{strict} above. Given (ii) Ð (iv), (iv) forces $A > B > C$ to categorical, since for each component $B'$ in $B$, $A > B' > C$ is categorical. Denoting the skew chain by $\mathbf S$, (ii) forces $\mathbf S/\RR$ and $\mathbf S/\LL$ to be distributive by Lemma \ref{distlema} and thus $S \subseteq S/\RR \times S/\LL$ to be distributive. 
In the light of Theorem \ref{strict}, the final comment is clear. 
\end{proof}

\begin{corollary}
A strictly categorical skew lattice is linearly distributive. 
\end{corollary}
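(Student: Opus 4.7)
The plan is to reduce the statement to Theorem \ref{disteq} by way of the opening observation of Section \ref{Midpoints and Distributive Skew Chains}, namely that a skew lattice is linearly distributive if and only if each of its skew chains of $\DD$-classes is distributive. Thus it suffices to show that every skew chain $A > B > C$ inside a strictly categorical skew lattice $\mathbf S$ is distributive.

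First, I would unpack the hypothesis: the defining property of strict categoricity forces the middle $\DD$-class $B$ to consist of a single $AC$-component, since each $A$-coset in $B$ meets each $C$-coset in $B$ and hence all elements of $B$ are $AC$-connected. Second, I would observe that strict categoricity is a varietal property (the reference cited just before Theorem \ref{strict}), so its restriction to the subalgebra $A \cup B \cup C$ retains strict categoricity. Thus the skew chain $A > B > C$ is itself strictly categorical on its single component $B' = B$.

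With these two facts in hand, condition (iv) of Theorem \ref{disteq} is satisfied, and the equivalence (iv) $\Leftrightarrow$ (i) in that theorem yields that $A > B > C$ is distributive. Since the chain was arbitrary, applying the opening sentence of the section gives linear distributivity of $\mathbf S$. There is no real obstacle to surmount here — essentially all the work has been done in Theorem \ref{disteq} — and the proof amounts to the straightforward bookkeeping that identifies the strictly categorical hypothesis with the "$B$ is a single $AC$-component" condition in item (iv).
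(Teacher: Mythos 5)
Your proposal is correct and follows exactly the route the paper intends: the corollary is stated as an immediate consequence of Theorem \ref{disteq}, obtained by noting that strict categoricity makes the middle class of any skew chain a single $AC$-component satisfying condition (iv), whence each skew chain is distributive and hence the skew lattice is linearly distributive. Your added remark that the subalgebra $A\cup B\cup C$ inherits strict categoricity because strictly categorical skew lattices form a variety is the right way to make the bookkeeping explicit.
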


Given $a > c$ as above, their midpoint $b$ in the component $B'$ depends on the interplay of the $A$-cosets and $C$-cosets within $B'$. 
Indeed, given any $a \in A$, the set of \emph{images} of $a$ in $B'$, is the set $a\wedge B'\wedge a = \set{a\wedge b\wedge a\mid b \in B'} = \set{b \in B'\mid a > b}$. This set parameterizes the $A$-cosets in $B'$ since each possesses exactly one $b$ such that $a > b$. 
Likewise, for each $c \in C$ the image set $c\vee B'\vee c = \set{c\vee b\vee c\mid b \in B'} = \set{b \in B'\mid b > c}$ parameterizes all cosets of $C$ in $B'$ (see \cite{Le93}, Section 1). Both images sets are orthogonal in $B'$ in the following sense: for any $a \in A$, all images of $a$ in $B'$ lie in a unique $C$-coset in $B'$. Likewise for any $c \in C$, all images of $c$ in $B'$ lie in a unique $A$-coset in $B'$. Finally, given $a>c$ with $a\in A$ and $c\in C$, their unique midpoint $b \in B'$ lies jointly in the $C$-coset in $B'$ containing all images of $a$ in $B'$ and in the $A$-coset in $B'$ containing all images of $c$ in $B'$. (See \cite{CAT}, Theorem 4.1.) Of course, every $b$ in $B'$ is the midpoint of some pair $a>c$. For a fixed pair $a>c$, the set $\mu(a, c)$ of all midpoints in $B$ is a rectangular subalgebra that forms a natural set of parameters for the family of all $AC$-components in $B$:
just let $b$ in $\mu(a, c)$ correspond to the component $B'$ containing $b$.
In the following partition diagram, the $A$-coset of $b$ contains all images ($\bullet$'s) of $c$ in $B'$, while the $C$-coset of $b$ has all images ($\star$'s) of $a$ in $B'$. The element $b$ is the unique image of both $a$ and $c$.

\begin{center}
\begin{tikzpicture}[scale=0.7]

\draw (-2,2) -- (3,2);
\draw (-2,1) -- (3,1);
\draw (-2,0) -- (3,0);
\draw (-2,-2) -- (3,-2);
\draw (-2,-3) -- (3,-3);
\draw (-2,-3) -- (-2,2);
\draw (-1,-3) -- (-1,2);
\draw (0,-3) -- (0,2);
\draw (2,-3) -- (2,2);
\draw (3,-3) -- (3,2);

\node at (4,2.2){$\DD$-class $B$};
\node at (-.5,2.4){$AC$-coset};
\node (x) at (1,1.5){$\dots$};
\node (2x) at (1,0.5){$\dots$};
\node (3x) at (1,-2.5){$\dots$};
\node (4x) at (1,-1){$\ddots$};
\node (5x) at (-1.5,-1){$\vdots$};
\node (6x) at (-0.5,-1){$\vdots$};
\node (7x) at (2.5,-1){$\vdots$};
\node (A) at (-4,0.7){$A$-coset};
\node (C) at (-0.4,-3.8){$C$-coset};
\node (bb) at (-0.6,0.6){$b$};
\node (bu1) at (-1.6,0.6){$\bullet$};
\node (st1) at (-0.6,1.6){$\star$};
\node (st2) at (-0.5,-2.6){$\star$};
\node (bu2) at (2.4,0.6){$\bullet$};

\draw[arrows=-latex'] (-.2,2.2) -- (-0.5,0.5);
\draw[arrows=-latex'] (-3.2,0.5) -- (-1.5,0.5);
\draw[arrows=-latex'] (-0.5,-3.5) -- (-0.5,-2.7);

\end{tikzpicture}
\end{center}

\begin{example}\label{nlindist}
Using Mace4 \cite{prover}, two minimal 12-element categorical skew chains have been found that are not linearly distributive, one left-handed and the other its right- handed dual. Their common Hasse diagram follows where $b_{i} -_{C} d_{j}$ iff $i + j = 0$ (mod 4).

\begin{center}
\begin{tikzpicture}[scale=0.8]

  \node (A) at (0,2){$A$} ;
  \node (x) at (0,1){$\vdots$} ;
  \node (B) at (0,0){$B$} ;
  \node (xx) at (0,-1){$\vdots$} ;
  \node (C) at (0,-2){$C$} ;
  
  \node (a1) at (4,2){$a_{1}$} ;
  \node (a2) at (5,2){$a_{2}$};
  \node (c1) at (4,-2){$c_{1}$}  ;
  \node (c2) at (5,-2){$c_{2}$} ;
  \node (b1) at (1,0){$b_{1}$} ;
  \node (b2) at (2,0){$b_{2}$};
  \node (b3) at (3,0){$b_{3}$} ;
  \node (b4) at (4,0){$b_{4}$};
  \node (d1) at (5,0){$d_{1}$} ;
  \node (d2) at (6,0){$d_{2}$};
  \node (d3) at (7,0){$d_{3}$} ;
  \node (d4) at (8,0){$d_{4}$};
  
    \draw[dotted] (a1) -- (b1);
    \draw[dotted] (a2) -- (b2);
    \draw[dotted] (a1) -- (b3);
    \draw[dotted] (a2) -- (b4);
    \draw[dotted] (a1) -- (d1);
    \draw[dotted] (a2) -- (d2);
    \draw[dotted] (a1) -- (d3);
    \draw[dotted] (a2) -- (d4);
    \draw[dotted] (c1) -- (b1);
    \draw[dotted] (c1) -- (b2);
    \draw[dotted] (c1) -- (b3);
    \draw[dotted] (c1) -- (b4);
    \draw[dotted] (c2) -- (d1);
    \draw[dotted] (c2) -- (d2);
    \draw[dotted] (c2) -- (d3);
    \draw[dotted] (c2) -- (d4);
        
    \draw (b1) -- (b2) node[pos=.5,below] {$A$};
    \draw (b2) -- (b3) node[pos=.5,below] {};
    \draw (b3) -- (b4) node[pos=.5,below] {$A$};
    \draw (b4) -- (d1) node[pos=.5,below] {};
    \draw (d1) -- (d2) node[pos=.5,below] {$A$};
    \draw (d2) -- (d3) node[pos=.5,below] {};
    \draw (d3) -- (d4) node[pos=.5,below] {$A$};
    \draw (a1) -- (a2) node[pos=.5,below] {};
    \draw (c1) -- (c2) node[pos=.5,below] {};
    
\end{tikzpicture}
\end{center}

\noindent In both cases, $a_{1} >b_{odd},d_{odd}$ and $a_{2} >b_{even},d_{even}$, all $b_{i} >c_{1}$, all $d_{i} >c_{2}$, and $a_{1},a_{2} >$ both $c_{1}, c_{2}$. (thus both skew chains are categorical since all cosets involving just $A$ and $C$ are trivial). 
We denote the left-handed skew lattice thus determined by $\mathbf U_{2}$ and its right-handed dual by $\mathbf V_{2}$. 
Both $\mathbf U_{2}$ and $\mathbf V_{2}$ are not distributive. 
Indeed, given the coset structure on $B$, we get $a_{1}\wedge (b_{2}\vee c_{2}) = a_{1}\wedge d_{2} = d_{1}$, while $(a_{1}\wedge b_{2}) \vee (a_{1}\wedge c_{2}) = b_{1} \vee c_{2} = d_{3} \neq d_{1}$ in $\mathbf U_{2}$. $\mathbf V_{2}$ is handled similarly. 
Note that in both $\mathbf U_{2}$ and $\mathbf V_{2}$, $B$ is an $AC$- connected, but $a_{1} > b_{1}$, $b_{3} > c_{1}$, and also $a_{2} > b_{2}$, $b_{4} > c_{1}$, etc.
\end{example}

\noindent (Strictly) categorical skew lattices were studied in \cite{CAT}. 
A number of lovely counting results for finite strictly categorical skew chains may be found in \cite{JPC11} or \cite{JPC12}.


\section{From Linear Distributivity to Distributive Skew Lattices}
\label{From Linear Distributivity to Distributive Skew Lattices}

One may ask: \emph{Does linearly distributive plus quasi-distributive imply distributive?} 
In general the answer is no. It is however yes in two special cases. 
In \cite{Le92} it was shown that a normal skew lattice is distributive if and only it is quasi-distributive. 
This result can be extended to strictly categorical skew lattices. 
But first recall from \cite{Ka11c} that a skew lattice $\mathbf S$ is simply cancellative if for all $x, y, z \in S$,
\begin{equation}\label{simpcanc}\tag{5.1}
x\vee z\vee x = y\vee z\vee y \text{  and  } x\wedge z\wedge x = y\wedge z\wedge yÊ\text{  imply  } x = y. 
\end{equation}
Cancellative skew lattices are simply cancellative, and simply cancellative skew lattices in turn are quasi-distributive since \eqref{simpcanc} rules out $\mathbf M_{3}$ and $\mathbf N_{5}$ as subalgebras.

\begin{theorem}\label{strcat} 
Strictly categorical, quasi-distributive skew lattices are both distributive and simply cancellative. 
They are cancellative precisely when they are also symmetric.
\end{theorem}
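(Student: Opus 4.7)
The unifying observation, used throughout, is that in a strictly categorical skew lattice $\mathbf S$, Theorem~\ref{strict}(iii) guarantees that every interval $[a, b] = \{u \in S : a \geq u \geq b\}$ with $a \geq b$ is a sublattice of $\mathbf S$, hence a commutative lattice. Under quasi-distributivity, $\mathbf S/\DD$ is a distributive lattice; since $\DD$ restricts to the identity on any commutative lattice, each such $[a, b]$ embeds into $\mathbf S/\DD$ as a sublattice, so $[a,b]$ is itself a distributive lattice. The overall strategy is to drop the competing expressions into a common interval and exploit the triviality of $\DD$ there.

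Simple cancellativity is the cleanest part. Suppose $a := x \vee z \vee x = y \vee z \vee y$ and $b := x \wedge z \wedge x = y \wedge z \wedge y$. Routine absorption yields $a \geq x, y \geq b$, so $x, y \in [a,b]$. In $\mathbf S/\DD$ we have $D_x \vee D_z = D_y \vee D_z$ and $D_x \wedge D_z = D_y \wedge D_z$; the distributive lattice $\mathbf S/\DD$ is cancellative, hence $D_x = D_y$, i.e., $x \DD y$. Triviality of $\DD$ on the sublattice $[a,b]$ then forces $x = y$.

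For distributivity I plan to prove \eqref{GMD}; the dual \eqref{GJD} will follow by the same argument with $\vee$ and $\wedge$ interchanged. Fix $x, y, z \in S$ and set $p = x \wedge (y \vee z) \wedge x$ and $q = (x \wedge y \wedge x) \vee (x \wedge z \wedge x)$. Absorption shows $p, q \leq x$, and both map in $\mathbf S/\DD$ to the common value $D_x \wedge (D_y \vee D_z) = (D_x \wedge D_y) \vee (D_x \wedge D_z)$ by quasi-distributivity; hence $p \DD q$. The aim is to exhibit a common lower bound $w$ with $w \leq p$ and $w \leq q$ in the natural partial order. Then $p, q \in [x, w]$, a distributive lattice on which $\DD$ is trivial, yielding $p = q$. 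The natural candidate is $w = x \wedge y \wedge z \wedge x$; regularity \eqref{reg} gives $w = (x \wedge y \wedge x) \wedge (x \wedge z \wedge x)$, which presents $w$ as a meet of two join-summands of $q$ and makes $w \leq q$ plausible via the interval-sublattice structure, while $w \leq p$ is set up by combining absorption identities such as $z \wedge (y \vee z) = z$ with regularity. Verifying these partial-order inequalities is the principal technical hurdle; it is precisely where the easy normal-skew-lattice argument of~\cite{Le92} (carried out inside the single sublattice $[x]\downarrow$) must be refined by restricting to specific intervals, whose lattice structure is the content of strict categoricity.

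For the final equivalence, the forward direction is standard: cancellative skew lattices are always symmetric, as indicated in the introduction. For the converse, invoke the well-known characterization (see \cite{Ka11c} or \cite{Le96}) that a skew lattice is cancellative if and only if it is simultaneously simply cancellative and symmetric. Since simple cancellativity is already established in our setting, the presence of symmetry upgrades it to full cancellativity.
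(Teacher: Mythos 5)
Your arguments for simple cancellativity and for the final equivalence are essentially the paper's own: the paper likewise reads off $x\DD y$ from the distributive (hence cancellative) quotient $\mathbf S/\DD$ and then applies Theorem~\ref{strict} to the configuration $x\vee z\vee x\geq x,y\geq x\wedge z\wedge x$, and it too leaves ``cancellative $\Leftrightarrow$ simply cancellative $+$ symmetric'' to the known characterization from \cite{Ka11c}. Your plan for distributivity is also the paper's strategy: trap $p=x\wedge(y\vee z)\wedge x$ and $q=(x\wedge y\wedge x)\vee(x\wedge z\wedge x)$ between $x$ and a common lower bound $w$, observe $p\DD q$ from quasi-distributivity, and let strict categoricity (Theorem~\ref{strict}(ii) or (iii)) force $p=q$.

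However, the step you explicitly defer --- verifying $w\leq p$ and $w\leq q$ --- is the entire content of the proof, and the details you sketch for it would fail. First, $z\wedge(y\vee z)=z$ is \emph{not} an absorption identity of a general skew lattice: \eqref{absidentities} gives only $z\wedge(z\vee y)=z$ and $(y\vee z)\wedge z=z$, and $z\wedge(y\vee z)=z$ is precisely the left-handed identity \eqref{id1}, false in right-handed algebras. Second, $u\wedge v\leq u\vee v$ in the natural partial order is not a general skew lattice fact (it already fails in a two-element rectangular algebra), so presenting $w=x\wedge y\wedge z\wedge x$ as $(x\wedge y\wedge x)\wedge(x\wedge z\wedge x)$ does not yield $w\leq q$; and you cannot appeal to ``the interval-sublattice structure'' of $[x,w]$ before knowing that $p,q,w$ actually lie under $x$ and over $w$, which is what is being proved. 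The paper avoids both problems by choosing $w=x\wedge z\wedge y\wedge x$ (your candidate with $y$ and $z$ transposed), for which both inequalities are short regularity-plus-absorption computations: $w\wedge p=_{\eqref{reg}}x\wedge z\wedge y\wedge(y\vee z)\wedge x=_{\eqref{absidentities}}w$, since the inner $y$ sits adjacent to $y\vee z$ and genuine absorption applies; and $w\wedge q=_{\eqref{reg}}x\wedge z\wedge x\wedge y\wedge x\wedge q=_{\eqref{absidentities}}x\wedge z\wedge x\wedge y\wedge x=_{\eqref{reg}}w$, since $x\wedge y\wedge x$ is a join-summand of $q$. Until you replace your $w$ and supply these (and the symmetric $p\wedge w$, $q\wedge w$) verifications, the claim that \eqref{GMD} holds is not established.
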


\begin{proof}
In general, $a > a\wedge (b\vee c)\wedge a$ and $a > (a\wedge b\wedge a) \vee (a\wedge c\wedge a)$ both hold. 
In turn, so do both $a\wedge c\wedge b\wedge a < a\wedge (b\vee c)\wedge a$ and $a\wedge c\wedge b\wedge a < (a\wedge b\wedge a) \vee (a\wedge c\wedge a)$. Indeed, applying regularity and absorption we have, e.g.,

$$
\begin{array}{lcl}
(a\wedge c\wedge b\wedge a)\wedge [a\wedge (b\vee c)\wedge a] &=_{\eqref{reg}}& a\wedge c\wedge b\wedge (b\vee c)\wedge a \\
&=_{\eqref{absidentities}}& a\wedge c\wedge b\wedge a  \\

\\
&\text{and}& \\
\\

(a\wedge c\wedge b\wedge a)\wedge [(a\wedge b\wedge a) \vee (a\wedge c\wedge a)] &=_{\eqref{reg}}& a\wedge c\wedge a\wedge b\wedge a\wedge [(a\wedge b\wedge a) \vee (a\wedge c\wedge a)] \\
&=_{\eqref{absidentities}}& a\wedge c\wedge a\wedge b\wedge a =_{\eqref{reg}} a\wedge c\wedge b\wedge a
\end{array}
$$

In any quasi-distributive skew lattice $\mathbf S$, $a\wedge (b\vee c)\wedge a \Drel (a\wedge b\wedge a) \vee (a\wedge c\wedge a)$. 
Thus if $\mathbf S$ is quasi-distributive and strictly categorical, Theorem \ref{strict} implies that \eqref{GMD} and dually \eqref{GJD} must hold.

Let $x,y,z\in S$ be such that $x\vee z\vee x=y\vee z\vee y$ and $x\wedge z\wedge x=y\wedge z\wedge y$. 
If $\mathbf S/\DD$ is distributive, then $x$ and $y$ share a common image in $\mathbf S/\DD$, placing them in the same $\DD$-class in $\mathbf S$. 
But also $x\wedge z\wedge x \leq$ both $x, y \leq x\vee z\vee x$. If $\mathbf S$ is strictly categorical, then Theorem \ref{strict} gives $x=y$. 
\end{proof}

\begin{corollary}
A strictly categorical skew lattice is distributive if and only if no subalgebra is a copy of $\mathbf M_{3}$ or $\mathbf N_{5}$
\end{corollary}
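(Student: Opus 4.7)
The plan is to derive this corollary directly from Theorem \ref{strcat} together with the characterization of quasi-distributivity already recalled in the Introduction, namely that a skew lattice is quasi-distributive exactly when its maximal lattice image is distributive, which by Birkhoff's classical theorem is exactly when neither $\mathbf{M}_{3}$ nor $\mathbf{N}_{5}$ embeds as a sublattice of $\mathbf{S}/\DD$, and by the cited result (\cite{Ka11c}, Theorem 3.2) exactly when neither embeds as a subalgebra of $\mathbf{S}$.

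For the forward direction, I would argue that every distributive skew lattice is quasi-distributive: the identities \eqref{GMD} and \eqref{GJD} pass to the quotient $\mathbf{S}/\DD$, where they collapse to the usual lattice distributive laws, so $\mathbf{S}/\DD$ is a distributive lattice and thus contains no copy of $\mathbf{M}_{3}$ or $\mathbf{N}_{5}$. Since any sublattice or subalgebra copy of $\mathbf{M}_{3}$ or $\mathbf{N}_{5}$ inside $\mathbf{S}$ would project to a corresponding copy in $\mathbf{S}/\DD$ (the five generators lie in five distinct $\DD$-classes because the operations restricted to such a copy are already commutative), this rules out $\mathbf{M}_{3}$ and $\mathbf{N}_{5}$ as subalgebras of $\mathbf{S}$ itself.

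For the reverse direction, I would invoke the characterization of quasi-distributivity stated in the Introduction: the absence of $\mathbf{M}_{3}$ and $\mathbf{N}_{5}$ as subalgebras of $\mathbf{S}$ is equivalent to $\mathbf{S}$ being quasi-distributive. Combined with the hypothesis that $\mathbf{S}$ is strictly categorical, Theorem \ref{strcat} immediately gives that $\mathbf{S}$ is distributive, completing the equivalence.

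The only possible friction is making sure the forward direction really rules out subalgebra copies (not just sublattice copies) of $\mathbf{M}_{3}$ and $\mathbf{N}_{5}$; this is automatic because $\mathbf{M}_{3}$ and $\mathbf{N}_{5}$ are themselves lattices, so any subalgebra of $\mathbf{S}$ isomorphic to one of them is a commutative skew sublattice whose elements must lie in distinct $\DD$-classes, and hence projects faithfully into $\mathbf{S}/\DD$. Given this, the corollary is essentially a packaging of Theorem \ref{strcat} with the quasi-distributivity characterization.
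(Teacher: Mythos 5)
Your proposal is correct and matches the paper's (implicit) intent: the corollary is stated without proof precisely because it is the combination of Theorem \ref{strcat} with the equivalence, recalled in the Introduction from \cite{Ka11c}, between quasi-distributivity and the absence of $\mathbf M_{3}$ and $\mathbf N_{5}$ as subalgebras, together with the standard fact that distributive skew lattices forbid these subalgebras. Your extra care about subalgebra copies versus sublattice copies projecting faithfully to $\mathbf S/\DD$ is sound but not a departure from the paper's route.
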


Given Theorems \ref{disteq} and \ref{strcat} one might expect linearly distributive, quasi-distributive skew lattices to be distributive. 
Mace4, however, has produced four minimal counterexamples. 
They turn out to be Spinks' minimal 9-element examples of skew lattices for which exactly one of \eqref{GMD} or \eqref{GJD} hold. (See \cite{Sp00} and Example \ref{nondistrib} below.) 
Since \eqref{GMD} and \eqref{GJD} are equivalent for symmetric skew lattices and skew chains are always symmetric, these examples are linearly distributive, so that appropriate products of them are both linearly distributive and quasi-distributive, but satisfy neither \eqref{GMD} nor \eqref{GJD}.

Spinks' examples are necessarily non-symmetric. This leads one to ask: Do linear distributivity and quasi-distributivity jointly imply distributivity for symmetric skew lattices? 
\smallskip

Before showing this to be the case, we first consider the broader problem of deciding which linearly distributive, quasi-distributive skew lattices are distributive. 
To see what else is required, we begin with a property common to all skew lattices.

Given $\DD$-classes $A$ and $B$, their meet class $M$, and an element $m \in M$, then $a\wedge b\wedge a = a\wedge b'\wedge a$ for all $a\in A$ and all $b,b' \in Im(m\mid B)$, the set of all images of $m$ in $B$.

\begin{center}
$\begin{array}{cc}

\begin{tikzpicture}[scale=.7]

  \node (a) at (-1,1){$A$} ;
  \node (b) at (1,1){$B$} ;
  \node (m) at (0,0){$M$};
  
    \draw[dotted] (a) -- (m) -- (b);

\end{tikzpicture}

&

\begin{tikzpicture}[scale=.8]

  \node (a) at (-1,1){$a$} ;
  \node (b) at (3,1){$b,b' \in Im(m\mid B)$} ;
  \node (m) at (0,0){$m$};
  
    \draw[dashed] (a) -- (m) ;
    \draw[dotted] (b) -- (m) ;

\end{tikzpicture}

\end{array}$
\end{center}

\noindent Indeed if $a'\in  Im(m\mid A)$ so that $a'\wedge b = m = b\wedge a'$ for all $b \in Im(m\mid B)$, then regularity implies $a\wedge b\wedge a = a\wedge a'\wedge b\wedge a'\wedge a = a\wedge m\wedge a$, and this occurs for all $b \in Im(m\mid B)$. 
More generally we have:
\smallskip

The \emph{Meet-class Condition} (MCC): Given $\DD$-classes $A$ and $B$, their meet-class $M$, and elements $a, a' \in A$ and $b, b' \in B$, then $a\wedge b\wedge a = a\wedge b'\wedge a$ if $b$ and $b'$ share a common image in $M$. 
Likewise $b\wedge a\wedge b = b\wedge a'\wedge b$ if $a$ and $a'$ share a common image in $B$. 
Finally, all four outcomes coincide when $a$, $a'$, $b$ and $b'$ all share a common image in $M$.

Dualizing, one has the \emph{Join-class Condition} (JCC), with all skew lattices having both properties. 
 
\smallskip

Not all skew lattices, however, have their following extensions:

\smallskip
 
The \emph{Extended Meet-class Condition} (EMCC). Given $\DD$-classes $A$ and $B$, their meet class $M$, an element $a$ in $A$ and elements $d$ and $d'$ in a $\DD$-class $D$ lying above $B$, then $a\wedge d\wedge a = a\wedge d'\wedge a$ if $d$ and $d'$ share a common image in $M$ and a common $B$-coset in $D$.
 
\begin{center}
$\begin{array}{cc}

\begin{tikzpicture}[scale=.8]

  \node (a) at (-1,1){$A$} ;
  \node (b) at (1,1){$B$} ;
  \node (m) at (0,0){$M$};
  \node (d) at (1,2){$D$};
  
    \draw[dotted] (a) -- (m) -- (b) -- (d);

\end{tikzpicture}

&

\begin{tikzpicture}[scale=.8]

  \node (a) at (-1,1){$a$} ;
  \node (b) at (3,2){$d -_{B} d' \in Im(m\mid D)$} ;
  \node (m) at (0,0){$m$};
  
    \draw[dashed] (a) -- (m) ;
    \draw[dotted] (b) -- (m) ;

\end{tikzpicture}

\end{array}$
\end{center}

Dually, there is the \emph{Extended Join-class Condition} (EJCC).
\smallskip

An equivalent formulation of the (E)MCC requires a broader way to describe cosets. 
Given $\DD$-classes $A$ and $B$, set $A\wedge b\wedge A = \set{a\wedge b\wedge a'\mid a,a'\in A}$ for any $b\in B$. 
If $A \geq B$, then $A\wedge b\wedge A$ is just a typical $A$-coset in $B$. 
If $B \geq A$, then $A\wedge b\wedge A = A$, the unique $A$-coset in itself. 
In general, setting $M = A\wedge B$, regularity \eqref{reg} and other basic facts imply:

\begin{itemize} 
\item[i)] Given $b \geq m$ where $b \in B$ and $m \in M$, $A\wedge b\wedge A = A\wedge m\wedge A$, an $A$-coset in $M$; conversely every coset $A\wedge m\wedge A$ of $A$ in $M$ is just $A\wedge b\wedge A$ for some $b$ in $B$.
\item[ii)] For all $b$, $b'$ in $B$, if $A\wedge b\wedge A = A\wedge b'\wedge A$, then $a\wedge b\wedge a = a\wedge b'\wedge a$ for all $a$ in $A$, with $A\wedge b\wedge A$ being just $\set{a\wedge b\wedge a\mid a\in A}$.
\end{itemize}

Indeed, given $b \geq m$, pick $a_m \in A$ so that $a_m \geq m$. 
Then $m = a_m\wedge b$ so that $a\wedge b\wedge a'= a\wedge a_m\wedge b\wedge a' = a\wedge m\wedge a'$ by \eqref{reg}. 
Conversely each $m\in M$ factors as some $a_m\wedge b$; thus $a\wedge m\wedge a' = a\wedge a_m\wedge b\wedge a' = a\wedge b\wedge a'$ by \eqref{reg} and (i) follows. 
Note that $A\wedge b\wedge A = \set{a\wedge b\wedge a\mid a\in A}$ since \eqref{reg} gives $a\wedge b\wedge a' = a\wedge a'\wedge b\wedge a\wedge a'$. 
The remainder of (ii) also follows from \eqref{reg}.

The MCC is thus equivalent to: given $A$, $B$ and $M$ as above, $b, b' \geq  m$ for $m\in M$ and $b, b'\in B$ implies $A\wedge b\wedge A = A\wedge b'\wedge A$ as cosets of $A$ in $M$. 
The EMCC is likewise equivalent to: given also $d, d' \geq m$ for $m\in M$ and $d -_B d'\in D \geq B$, $A\wedge d\wedge A = A\wedge d'\wedge A$ as cosets of $A$ in $A\wedge D$. 
Dual remarks apply to the (extended) join-class condition.

\begin{theorem}\label{EMCC}
A skew lattice $\mathbf S$ has the EMCC property if and only if it satisfies 

\begin{equation}\label{dist3}\tag{5.3}
x\wedge ((y\wedge x\wedge y)\vee z\vee y\vee z\vee( y\wedge x\wedge y))\wedge x = x\wedge (y\vee z\vee y)\wedge x.
\end{equation}

$\mathbf S$ has the EJCC property if and only if it satisfies the dual of \eqref{dist3}. 
Skew lattices having the EMCC property [or the EJMC property] thus form a subvariety. 
Finally, $\wedge$-distributivity, given by \eqref{GMD}, implies EMCC, while $\vee$-distributivity, given by \eqref{GJD}, implies EJCC. 
\end{theorem}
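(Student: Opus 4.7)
The plan is to address the theorem's four claims: (a) EMCC $\iff$ \eqref{dist3}, (b) EJCC $\iff$ the dual of \eqref{dist3}, (c) the subvariety assertion, and (d) the implications \eqref{GMD} $\Rightarrow$ EMCC and \eqref{GJD} $\Rightarrow$ EJCC. Parts (b) and (c) will follow from (a): (b) by the standard $\wedge$--$\vee$ duality of skew lattice theory, and (c) because classes defined by identities are subvarieties. So I concentrate on (a) and (d).

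For the forward direction of (a), I assume EMCC. Setting $m := y\wedge x\wedge y$, $u := y\vee z\vee y$, and $u' := m\vee z\vee y\vee z\vee m$, the goal is $x\wedge u\wedge x = x\wedge u'\wedge x$. With $A := \DD_x$, $B := \DD_y$, $D := \DD_y\vee \DD_z$, and $M := A\wedge B$, one has $m\in M$ and both $u, u' \in D$. Three conditions must be verified in order to invoke EMCC with $a = x$: (i) $u\geq m$, which follows from $m\leq y\leq u$; (ii) $u'\geq m$, which follows by absorption since $m$ brackets the $\vee$-string defining $u'$; and (iii) $u -_B u'$. The third is the key technical step, which I would verify by using the regularity identity \eqref{reg} together with the relations $m\vee y = y = y\vee m$ to exhibit $u'$ as an element of the $B$-coset $B\vee u\vee B$. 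EMCC then yields \eqref{dist3}.

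For the reverse direction, I assume \eqref{dist3}, and take $a\in A$, $d, d' \in D \geq B$ satisfying the EMCC hypotheses: $d, d'\geq m\in M$ and $d -_B d'$. Substituting $x = a$, $y = b$, $z = d$ into \eqref{dist3} for various $b\in B$, the preorder relation $d\succeq b$ collapses $d\vee b\vee d$ to $d$ inside the bracket, giving $a\wedge(m_b\vee d\vee m_b)\wedge a = a\wedge(b\vee d\vee b)\wedge a$, where $m_b := b\wedge a\wedge b \in M$. As $b$ ranges over $B$, the element $b\vee d\vee b$ sweeps out the $B$-coset of $d$ in $D$, while $m_b\vee d\vee m_b$ varies correspondingly. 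A parallel family of equalities for $d'$, combined with regularity \eqref{reg} and the common image $m$ shared by $d$ and $d'$, then chains together to produce $a\wedge d\wedge a = a\wedge d'\wedge a$.

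For (d), I show \eqref{GMD} $\Rightarrow$ \eqref{dist3} by direct computation. In the left-handed case, Lemma \ref{LRdist} reduces \eqref{GMD} to $x\wedge y\wedge x = x\wedge y$ together with $\wedge$-distributivity over $\vee$. Iterating distributivity, the LHS of \eqref{dist3} reduces to $(x\wedge y)\vee (x\wedge z)\vee (x\wedge y)\vee (x\wedge z)\vee (x\wedge y)$ (noting $x\wedge m = x\wedge y$ by left-handedness), while the RHS reduces to $(x\wedge y)\vee (x\wedge z)\vee (x\wedge y)$. The left-handed identity $p\vee q\vee p = q\vee p$ from \eqref{LH} collapses both expressions to $(x\wedge z)\vee (x\wedge y)$, so they agree. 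The right-handed case is dual, and general skew lattices follow via $\mathbf S\hookrightarrow \mathbf S/\RR\times \mathbf S/\LL$. By duality, \eqref{GJD} implies the dual of \eqref{dist3}, hence EJCC. The principal obstacle throughout is the coset-theoretic bookkeeping in part (a): establishing $u -_B u'$ in the forward direction, and chaining the instances of \eqref{dist3} in the reverse direction using the coset relation and shared image.
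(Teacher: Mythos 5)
Your forward direction of (a), the duality and subvariety remarks, and part (d) all track the paper's argument. Indeed, the paper dismisses (d) with ``the implications are clear,'' and your left-handed computation via Lemma \ref{LRdist} together with the collapse $p\vee q\vee p\vee q\vee p=q\vee p$ is a correct way to fill that in; likewise your checklist for the forward direction of (a) --- $u\geq m$, $u'\geq m$, and $u-_{B}u'$, the last obtainable from $y\vee u'\vee y=u$ using \eqref{reg} and $m\vee y=y=y\vee m$ --- is exactly what the paper's one-line forward direction tacitly requires.

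The genuine gap is in the reverse direction of (a). You substitute $x=a$, $y=b$, $z=d$ into \eqref{dist3} and obtain $a\wedge(m_{b}\vee d\vee m_{b})\wedge a=a\wedge(b\vee d\vee b)\wedge a$ with $m_{b}=b\wedge a\wedge b$, and then assert that these equalities ``chain together'' as $b$ varies, using the common image $m$ of $d$ and $d'$. But the hypothesis $d,d'\geq m$ never actually enters the equalities you have derived: $m_{b}$ is determined by $b$ and the \emph{fixed} element $a$, and there is no reason for $m_{b}$ to coincide with $m$, so the terms $a\wedge(m_{b}\vee d\vee m_{b})\wedge a$ do not collapse to $a\wedge d\wedge a$ and the proposed chaining is unsupported. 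The paper resolves this by changing the first variable rather than the second: pick $b\in B$ with $m<b<d$, then $a'\in A$ with $a'\wedge b=b\wedge a'=m$ (possible since $A\geq M$), write $d'=b'\vee d\vee b'$ where, using $d'\geq m$, one may take $m<b'<d'$; then $b'\wedge a'\wedge b'=m$ (both lie in the rectangular class $M$ above $m$), so \eqref{dist3} with $x=a'$, $y=b'$, $z=d$ reads $a'\wedge(m\vee d\vee b'\vee d\vee m)\wedge a'=a'\wedge(b'\vee d\vee b')\wedge a'$, whose left side reduces to $a'\wedge d\wedge a'$ (since $d\succeq b'$ and $d\geq m$) and whose right side is $a'\wedge d'\wedge a'$; finally regularity \eqref{reg} transfers $a'\wedge d\wedge a'=a'\wedge d'\wedge a'$ back to $a\wedge d\wedge a=a\wedge d'\wedge a$. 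Without this substitution $x\mapsto a'$, or some equivalent device that actually brings the common image $m$ into play, your reverse direction does not close.
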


\begin{proof}
Setting $a = x$, $m = y\wedge x\wedge y$, $B = \DD_{y}$, $D = \DD_{d}= \DD_{d'}$ where $d = y\vee z\vee y$ and $d' = (y\wedge x\wedge y)\vee z\vee y\vee z\vee (y\wedge x\wedge y)$, the EMCC gives \eqref{dist3}. 

Conversely, given $a$, $m$, $d$ and $d'$ satisfying the requirements of \eqref{dist3}, first pick $b$ in $B$ so that $m < b < d$ and pick $a'$ in $A$ so that $a'\wedge b = b\wedge a' = m$. 
Next let $d' = b'\vee d\vee b'$ for some $b'$ in $B$. 
By assumption we also have $d' = m\vee b'\vee m\vee c\vee m\vee b'\vee m$ so that we may assume that $m < b' < d'$. 
Assigning $a'$ to $x$, $b'$ to $y$ and $d$ to $z$, \eqref{dist3} gives $a'\wedge d\wedge a' = a' \wedge (m \vee d \vee b' \vee d \vee m) \wedge a' = a' \wedge (b' \vee d \vee b') \wedge a' = a'\wedge d'\wedge a'$ from which $a\wedge d\wedge a = a\wedge d'\wedge a$ follows by the argument above, and the EMCC is verified. 
Clearly we have a pair of subvarieties. The implications are clear.
\end{proof}

The left-handed and right-handed versions of \eqref{dist3} are respectively: 

\begin{equation}\label{dist3L}\tag{5.3L}
x \wedge (y \vee z \vee (y \wedge x)) = x \wedge (z \vee y)
\end{equation}
\begin{equation}\label{dist3R}\tag{5.3R}
((x \wedge y) \vee z \vee y) \wedge x = (y \vee z) \wedge x.
\end{equation}

We will soon see (cf. Theorem \ref{distlindist} below) that all four special consequences of \eqref{GMD} and \eqref{GJD} - quasi-distributivity, linear distributivity, EMCC and EJCC - are also sufficient for a skew lattice to be distributive. 
It is fortuitous that the two latter conditions are also consequences of symmetry, leading to a major result of this paper, Theorem \ref{distrib}.

\begin{example}\label{nondistrib}\cite{Sp00}
Consider the following left-handed, 9-element example after Spinks where \eqref{GJD} holds but not \eqref{GMD} since: 
$$2\wedge (5\vee 8)\wedge 2 = 2\wedge 4\wedge 2 = 6 \neq 5 = 5 \vee 0 = (2\wedge 5\wedge 2) \vee (2\wedge 8\wedge 2).$$

\begin{center}
$\begin{array}{ccc}

\begin{tabular}{ l | c c c c c c c c c}
$\wedge$ & 0 & 1 & 2 & 3 & 4 & 5 & 6 & 7 & 8 \\
\hline
0 & 0 & 0 & 0 & 0 & 0 & 0 & 0 & 0 & 0 \\
1 & 0 & 1 & 2 & 3 & 4 & 5 & 6 & 7 & 8 \\ 
2 & 0 & 2 & 2 & 5 & 6 & 5 & 6 & 0 & 0 \\ 
3 & 0 & 3 & 5 & 3 & 3 & 5 & 5 & 7 & 7 \\
4 & 0 & 4 & 6 & 4 & 4 & 6 & 6 & 8 & 8 \\
5 & 0 & 5 & 5 & 5 & 5 & 5 & 5 & 0 & 0 \\
6 & 0 & 6 & 6 & 6 & 6 & 6 & 6 & 0 & 0 \\
7 & 0 & 7 & 0 & 7 & 7 & 0 & 0 & 7 & 7Ê\\
8 & 0 & 8 & 0 & 8 & 8 & 0 & 0 & 8 & 8 
\end{tabular}

&

\begin{tabular}{ l | c c c c c c c c c}
$\vee$ & 0 & 1 & 2 &  3 &  4&  5&  6&  7&  8 \\
\hline
0 & 0 & 1 & 2 & 3 & 4 & 5 & 6 & 7 & 8 \\
1 & 1 & 1 & 1 & 1 & 1 & 1 & 1 & 1 & 1 \\
2 & 2 & 1 & 2 & 1 & 1 & 2 & 2 & 1 & 1 \\
3 & 3 & 1 & 1 & 3 & 4 & 3 & 4 & 3 & 4 \\
4 & 4 & 1 & 1 & 3 & 4 & 3 & 4 & 3 & 4 \\
5 & 5 & 1 & 2 & 3 & 4 & 5 & 6 & 3 & 4 \\
6 & 6 & 1 & 2 & 3 & 4 & 5 & 6 & 3 & 4 \\
7 & 7 & 1 & 1 & 3 & 4 & 3 & 4 & 7 & 8 \\
8 & 8 & 1 & 1 & 3 & 4 & 3 & 4 & 7 & 8 
\end{tabular}

&

\begin{tikzpicture}[scale=1.2]

  \node (1) at (0,4){$1$} ;
  \node (2) at (-1,3){$2$} ;
  \node (3) at (.5,3){$3$} ;
    \node (4) at (1.5,3){$4$} ;
  \node (5) at (-1.5,2){$5$};
    \node (6) at (-.5,2){$6$};
  \node (7) at (.5,2){$7$} ;
    \node (8) at (1.5,2){$8$} ;
  \node (0) at (0,1){$0$};
  
    \draw[dotted] (1) -- (2) ;
    \draw[dotted] (1) -- (3) ;
    \draw[dotted] (1) -- (4) ;
    \draw[dotted] (3) -- (7) ;
    \draw[dotted] (4) -- (8) ;
    \draw[dotted] (7) -- (0) ;
    \draw[dotted] (8) -- (0) ;
    \draw[dotted] (2) -- (5) ;
    \draw[dotted] (2) -- (6) ;
    \draw[dotted] (5) -- (0) ;
    \draw[dotted] (6) -- (0) ;
    
    \draw[dotted] (5) -- (3) ;
    \draw[dotted] (6) -- (4) ;
    
    \draw (3) -- (4) ;
    \draw (5) -- (6) ;
    \draw (7) -- (8) ;
    
\end{tikzpicture}

\end{array}$
\end{center}
\smallskip
 
\noindent This example is non-upper symmetric. (Indeed, $5\wedge 8 = 0 = 8\wedge 5$ but $5\vee 8 = 4 \neq 3 = 8\vee 5$.) 
Notice that $2 \wedge (8 \vee 5 \vee (8 \wedge 2)) = 2 \wedge (3 \vee 0) = 2 \wedge 3 = 5$, while $2 \wedge (5 \vee 8) = 2 \wedge 4 = 6$, so that \eqref{dist3L} fails.
As mentioned above, Spinks' four examples of order 9 are the first cases where \eqref{dist3}  or its dual do not hold. Mace 4 has shown that all cases of order 10 - 13 where \eqref{dist3} or its dual 
do not hold contain a copy of a SpinksÕ example. The first cases where \eqref{dist3} or its dual do not 
hold, but contain no copy of a Spinks example occur with order 14.
\end{example}

Proceeding on to Theorem \ref{distlindist}, we first further characterize quasi-distributivity in the left-handed case.

\begin{lemma}
A left-handed skew lattice is quasi-distributive if and only if for all $x, y, z\in S$:
\begin{equation}\label{dist4}\tag{5.4}
x\wedge ((y\wedge x)\vee z) = x\wedge (y\vee z)
\end{equation}
\end{lemma}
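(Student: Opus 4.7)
I will treat the two directions separately, with $(\Leftarrow)$ being the quick one.

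\emph{$(\Leftarrow)$.} Identity \eqref{dist4} is preserved under the quotient homomorphism $\mathbf S \to \mathbf S/\DD$, so it holds in the lattice image. In a lattice, \eqref{dist4} is equivalent to distributivity: on the one hand, two applications of distributivity reduce both sides to $(x\wedge y)\vee(x\wedge z)$; on the other hand, \eqref{dist4} is easily seen to fail in $\mathbf M_3$ (take $x$, $y$, $z$ to be three distinct atoms, yielding $0$ on the left and $x$ on the right) and in $\mathbf N_5$ (with $a<b$ and $c$ incomparable, take $x=b$, $y=c$, $z=a$: the left side is $a$, the right is $b$). Hence $\mathbf S/\DD$ is distributive, so $\mathbf S$ is quasi-distributive.

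\emph{$(\Rightarrow)$.} Assume $\mathbf S$ is left-handed and quasi-distributive, and set $p = x\wedge ((y\wedge x)\vee z)$ and $q = x\wedge (y\vee z)$. A short calculation in the distributive lattice $\mathbf S/\DD$ yields $[p] = [q]$, so $p\DD q$; since $\mathbf S$ is left-handed, $\DD = \LL$, giving $p\LL q$. In particular, $p\wedge q = p$ and $q\wedge p = q$, so it suffices to verify $p\wedge q = q\wedge p$, which combined with the $\LL$-relation will force $p=q$. From the LH identity $x\wedge s\wedge x = x\wedge s$ we also get $p,q\leq x$ in the natural partial order, and $p\wedge q$ and $q\wedge p$ collapse to $x\wedge r\wedge s$ and $x\wedge s\wedge r$ respectively, where $r = (y\wedge x)\vee z$ and $s = y\vee z$.

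Observing that $r\preceq s$ (since $[r]\leq [s]$ in $\mathbf S/\DD$), the LH form of the preorder gives $r\wedge s = r$ and $r\vee s = s$, so $x\wedge r\wedge s = x\wedge r = p$ immediately. The main obstacle is showing $x\wedge s\wedge r$ is also equal to $p$: here I plan to apply the regularity identity \eqref{reg} together with absorption to swap the inner factors, exploiting that $y\wedge x$ and $y$ can be interchanged when sandwiched between occurrences of $x$ in a $\wedge$-product. If this direct combinatorial route stalls, an alternative is to pass to the subalgebra $\langle x,y,z\rangle \subseteq \mathbf S$ and leverage quasi-distributivity structurally --- using the distributivity of its lattice image plus the LH coset structure to verify \eqref{dist4} case by case on the generators.
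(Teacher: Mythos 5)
Your overall strategy is the right one and, in outline, matches the paper's: both directions hinge on $\DD$ reducing to $\LL$ in the left-handed case. Your $(\Leftarrow)$ direction (checking that \eqref{dist4} fails in $\mathbf M_{3}$ and $\mathbf N_{5}$ and passes to the lattice image) is exactly the paper's argument, just written out. The $(\Rightarrow)$ setup --- $p\DD q$, hence $p\LL q$, hence $p\wedge q=p$ and $q\wedge p=q$, with everything reducing to the value of $x\wedge s\wedge r$ where $r=(y\wedge x)\vee z$ and $s=y\vee z$ --- is also sound and is the same reduction the paper makes.

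However, the proof stops precisely at the step that carries all the content. Knowing $r\preceq s$ gives $r\wedge s\wedge r=r$, hence by left-handedness $r\wedge s=r$ and, by the duality \eqref{absequivalences}, $r\vee s=s$; it does \emph{not} give $s\wedge r=r$. The element $s\wedge r$ is merely $\LL$-related to $r$, and $\LL$-classes are nontrivial in a left-handed skew lattice, so nothing so far forces $x\wedge s\wedge r$ to equal $x\wedge r$. What is actually needed is the stronger statement $s\geq r$ in the natural \emph{partial} order, i.e. both $r\vee s=s$ and $s\vee r=s$, and this is where the paper does its one real computation: $y\vee z\vee(y\wedge x)\vee z=y\vee(y\wedge x)\vee z=y\vee z$ and $(y\wedge x)\vee z\vee y\vee z=(y\wedge x\wedge y)\vee y\vee z=y\vee z$, using \eqref{LH} and \eqref{absidentities}. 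With $s\geq r$ in hand one gets $q=q\wedge p=x\wedge s\wedge r=x\wedge r=p$. Your stated plan for this step --- interchanging $y\wedge x$ and $y$ when ``sandwiched between occurrences of $x$ in a $\wedge$-product'' via regularity --- does not apply, because the terms to be compared sit inside joins rather than inside an $x\wedge\cdots\wedge x$ sandwich; the decisive identity is a join computation. The fallback of a case analysis on the coset structure of $\langle x,y,z\rangle$ is too vague to count as a proof. So there is a genuine gap: the argument is correctly framed, but the key fact $y\vee z\geq(y\wedge x)\vee z$ is neither identified nor proved.
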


\begin{proof}
($\Leftarrow$) Clearly, neither $\mathbf M_{3}$ nor $\mathbf N_{5}$ satisfy \eqref{dist4}. 

($\Rightarrow $) If a skew lattice $\mathbf S$ is left-handed, then $y\vee z\geq (y\wedge x)\vee z$ for all $y,z\in S$. 
Indeed,taking joins of both sides both ways gives

$$
\begin{array}{lcl}
y\vee z\vee (y\wedge x)\vee z &=_{\eqref{LH}}& y\vee (y\wedge x)\vee z=_{\eqref{absidentities}} y\vee z \text{   and}\\
(y\wedge x)\vee z\vee y\vee z &=_{\eqref{LH}}& (y\wedge x\wedge y)\vee y\vee z=_{\eqref{absidentities}} y\vee z.
\end{array}
$$

But quasi-distributivity implies both sides of \eqref{dist4} are $\DD$-related and in fact, $\LL$-related.
Thus, we have $(x \wedge (y \vee z)) \wedge (x \wedge ((y \wedge x) \vee z)) = x \wedge (y \vee z)$. But $y \vee z \geq (y \wedge x) \vee z$ gives
$(x \wedge (y \vee z)) \wedge (x \wedge ((y \wedge x) \vee z)) =_{\eqref{LH}} x \wedge (y \vee z) \wedge ((y \wedge x) \vee z) =_{\eqref{absidentities}} x \wedge ((y \wedge x) \vee z)$ so that \eqref{dist4} follows. 
\end{proof}

\begin{theorem}\label{distlindist}
A quasi-distributive, linearly distributive skew lattice satisfies \eqref{GMD} if and only if it satisfies \eqref{dist3}. 
Likewise, a quasi-distributive, linearly distributive skew lattice satisfies \eqref{GJD} if and only if it satisfies the dual of \eqref{dist3}. 
Finally, a skew lattice is distributive if and only if it is quasi-distributive, linearly distributive and satisfies both \eqref{dist3} and its dual.
\end{theorem}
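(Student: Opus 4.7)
Distributivity is exactly the conjunction of \eqref{GMD} and \eqref{GJD}, so the third assertion follows from the first two. The forward directions of the first two equivalences are immediate: Theorem~\ref{EMCC} gives \eqref{GMD} $\Rightarrow$ \eqref{dist3} (and dually for \eqref{GJD}), while any distributive skew lattice is automatically quasi-distributive (it forbids $\mathbf{M}_3$ and $\mathbf{N}_5$) and linearly distributive (since \eqref{GMD} restricts to any subalgebra). So the real content is the reverse direction of the first equivalence; the second then follows by the global $\wedge$--$\vee$ duality, and the third by conjunction.

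My plan is to reduce to the left-handed case via the embedding $\mathbf{S} \hookrightarrow \mathbf{S}/\RR \times \mathbf{S}/\LL$ from Section~\ref{Background}, since each hypothesis and the conclusion is varietal. In the left-handed setting, Lemma~\ref{LRdist} reduces \eqref{GMD} to the single identity $x \wedge (y \vee z) = (x \wedge y) \vee (x \wedge z)$. Three tools are then available: \eqref{dist4} (the left-handed form of quasi-distributivity), \eqref{GMDg} (the left-handed form of linear distributivity), and \eqref{dist3L} (the left-handed form of the hypothesis \eqref{dist3}).

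The target identity is obtained by a short chain. Starting from $x \wedge (y \vee z)$, apply \eqref{dist3L} with $y$ and $z$ interchanged to rewrite it as $x \wedge (z \vee y \vee (z \wedge x))$. Next, apply \eqref{dist4} in its right-to-left direction, with its ``$y$'' taken to be $z$ and its ``$z$'' taken to be $y \vee (z \wedge x)$, to obtain $x \wedge ((z \wedge x) \vee y \vee (z \wedge x))$; left-handed absorption \eqref{LH} collapses the repeated $(z \wedge x)$, leaving $x \wedge (y \vee (z \wedge x))$. A second right-to-left application of \eqref{dist4} yields $x \wedge ((y \wedge x) \vee (z \wedge x))$, and \eqref{GMDg} then evaluates this as $(x \wedge y) \vee (x \wedge z)$. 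The right-handed case is exactly dual, so \eqref{GMD} holds on all of $\mathbf{S}$. Applying the global $\wedge$--$\vee$ duality to the whole argument establishes the analogous equivalence between \eqref{GJD} and the dual of \eqref{dist3}.

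The step I expect to be the main obstacle is the initial use of \eqref{dist3L}: the identity is asymmetric (the extra $(y \wedge x)$ appears on only one side, and the right-hand side reverses $y$ and $z$), so one must invoke it with $y$ and $z$ swapped in order for its right-hand side to match the starting expression $x \wedge (y \vee z)$. Once that first move is spotted, the remainder is two straightforward applications of \eqref{dist4} sandwiched around one use of \eqref{LH}, followed by \eqref{GMDg}.
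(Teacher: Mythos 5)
Your proposal is correct and matches the paper's own proof essentially step for step: the paper derives the same chain of equalities using \eqref{GMDg}, \eqref{dist4} (twice), \eqref{LH}, and \eqref{dist3L}, merely written in the opposite direction, starting from $(x\wedge y)\vee(x\wedge z)$ and ending at $x\wedge(y\vee z)$, and it likewise handles the right-handed case by duality and the general case via the embedding into $\mathbf S/\RR\times\mathbf S/\LL$.
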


\begin{proof}
Clearly, \eqref{GMD} $\Rightarrow $ \eqref{dist3}. 
To show \eqref{dist3} $\Rightarrow $ \eqref{GMD} under the given conditions, we first consider the left-handed case:

$$
\begin{array}{lcl}
(x \wedge y) \vee (x \wedge z) & =_{\eqref{GMDg}} & x \wedge [(y \wedge x) \vee (z \wedge x)]  \\
                                                 & =_{\eqref{dist4}} & x \wedge [y \vee (z \wedge x)]  \\
                                                 &=_{\eqref{LH}} & x \wedge [(z \wedge x) \vee y \vee (z \wedge x)] \\
                                                 &=_{\eqref{dist4}} & x \wedge [z \vee y \vee (z \wedge x)] \\
                                                 &=_{\eqref{dist3L}} & x \wedge (y \vee z)
\end{array}
$$

\noindent The right-handed case is similar and the general case now follows as usual. 
The second assertion now follows by $\vee -Ê\wedge$ duality and the final assertion from the first two. 
\end{proof}

Recall that a \emph{skew diamond} is a skew lattice with four $\DD$-classes, two being incomparable, say 
$A$ and $B$, and the remaining two being their join and meet $\DD$-classes, say $J$ and $M$. Skew diamonds trivially satisfy the EMCC. Here the nontrivial situations are $A > M < B < J$ where $A\wedge j\wedge A = A$ for all $j\in J$, and $B > M < A < J$ where similar remarks hold. Dually they satisfy the EJCC. Since skew diamonds are clearly quasi-distributive, we have: \emph{a skew diamond is distributive if and only if it is linearly distributive}. Skew diamonds play an important role in the basic theory of skew lattices. See, e.g., their role in \cite{Ka11c} where a number of forbidden algebras are skew diamonds.
\smallskip

Under what reasonable conditions must either \eqref{dist3} or its dual hold? 
They must hold for strictly categorical skew lattices since both sides of \eqref{dist3} are $< x$ but $> x\wedge y\wedge x$. 
We also have:

\begin{proposition}\label{USL}
An upper symmetric skew lattice satisfies \eqref{dist3}.
\end{proposition}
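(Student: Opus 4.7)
The plan is to use the Second Decomposition Theorem to reduce to the handed cases, then apply the characterizing identity \eqref{syma} of upper symmetry with strategic substitutions. Upper symmetry is a varietal property (characterized by \eqref{syma}), and \eqref{dist3} is itself an identity, so both are preserved by the embedding $\mathbf S \hookrightarrow \mathbf S/\RR \times \mathbf S/\LL$. By $\wedge$--$\vee$ duality between \eqref{syma} and \eqref{symb}, it suffices to verify \eqref{dist3} in the left-handed case.

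In a left-handed upper-symmetric skew lattice, \eqref{dist3} first simplifies considerably. Using \eqref{LH} one has $m = y \wedge x \wedge y = y \wedge x$, $y \vee z \vee y = z \vee y$, and $z \vee y \vee z = y \vee z$; combined with $u \vee v \vee u = v \vee u$ (from \eqref{LH}) and $x \wedge u \wedge x = x \wedge u$, the inner expression $d' = m \vee z \vee y \vee z \vee m$ collapses through $m \vee (y \vee z) \vee m$ to $(y \vee z) \vee m = y \vee z \vee (y \wedge x)$. Hence \eqref{dist3} becomes
\[
x \wedge (y \vee z \vee (y \wedge x)) = x \wedge (z \vee y). \qquad (*)
\]
Now apply \eqref{syma} with $u \leftarrow y$ and $v \leftarrow z$: since $z \geq z \wedge y$ in the left-handed setting yields $(z \wedge y) \vee z = z$, the right side of the \eqref{syma}-instance collapses to $z \vee y$, producing the upper-symmetric identity $y \vee z \vee (y \wedge z) = z \vee y$. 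Rewriting the RHS of $(*)$ accordingly reduces the goal to
\[
x \wedge (y \vee z \vee (y \wedge x)) = x \wedge (y \vee z \vee (y \wedge z)).
\]

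Both arguments lie in the $\DD$-class $\DD_y \vee \DD_z$ and differ only in the final $\vee$-term: $(y \wedge x)$ versus $(y \wedge z)$. To finish, apply \eqref{syma} with $u \leftarrow y \vee z$ and $v \leftarrow y \wedge x$, using the left-handed identity $(y \wedge x) \wedge (y \vee z) = y \wedge x$---immediate from \eqref{absequivalences} combined with $(y \wedge x) \vee (y \vee z) = y \vee z$ (which follows from $(y \wedge x) \vee y = y$). An analogous substitution with $(y \wedge z)$ in place of $(y \wedge x)$ provides the parallel equality. Combining these two instances via regularity \eqref{reg}, the correction terms coming from \eqref{syma} are annihilated by the outer $x \wedge \cdot$, making the two expressions coincide. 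The right-handed version of \eqref{dist3} then follows by $\wedge$--$\vee$ duality. The main obstacle lies in this last step: making precise how upper symmetry, via these \eqref{syma}-substitutions together with regularity, synchronizes the $x$-meets of two elements of $\DD_y \vee \DD_z$ whose $\vee$-tails lie in distinct meet-$\DD$-classes, so that the residual difference vanishes under $x \wedge \cdot$.
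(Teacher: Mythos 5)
Your reduction is sound up to a point: passing to the left-handed case, collapsing \eqref{dist3} to \eqref{dist3L}, and using the left-handed form of \eqref{syma} (namely $u\vee v\vee(u\wedge v)=v\vee u$) to restate the goal as
\[
x \wedge (y \vee z \vee (y \wedge x)) = x \wedge (y \vee z \vee (y \wedge z))
\]
are all correct. But the proof stops exactly where the real work begins, and you say as much yourself. The two instances of \eqref{syma} you propose for the finish, with $u\leftarrow y\vee z$ and $v\leftarrow y\wedge x$ (resp.\ $y\wedge z$), each only assert that $(y\vee z)\vee v\vee((y\vee z)\wedge v)=y\vee z$; they relate each perturbed join back to $y\vee z$ itself but give no handle on comparing $x\wedge(w\vee(y\wedge x))$ with $x\wedge(w\vee(y\wedge z))$, whose tails $y\wedge x$ and $y\wedge z$ lie in \emph{different} meet $\DD$-classes. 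The assertion that ``the correction terms coming from \eqref{syma} are annihilated by the outer $x\wedge\cdot$ via regularity'' is precisely the EMCC-type statement the proposition is meant to establish, restated rather than proved; regularity \eqref{reg} only lets you absorb factors $\DD$-equivalent to the outer $x$, and neither $y\wedge x$ nor $y\wedge z$ is such a factor. The paper closes this gap with a genuine chain of auxiliary identities: it first proves the order-theoretic fact $x\vee y\geq(x\vee(y\wedge z))\wedge(z\vee(y\wedge z))$ (this is where \eqref{syma} does its real work, together with \eqref{id1}--\eqref{id3}), converts that into $z\wedge(x\vee(y\wedge z))=z\wedge(x\vee y)\wedge(x\vee(y\wedge z))$, specializes to get $z\wedge(x\vee(y\wedge x\wedge z))=z\wedge(x\vee(y\wedge x))$, and finally substitutes $y\vee x$ for $x$ to land on \eqref{dist3L}. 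Some such multi-step argument is unavoidable; your sketch does not contain it.

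A secondary point: the reduction of the right-handed case to the left-handed one is by left--right (mirror) duality, under which \eqref{syma} is self-dual, not by the $\wedge$--$\vee$ duality between \eqref{syma} and \eqref{symb}; the latter would turn \eqref{dist3} into its dual (the EJCC identity) and upper symmetry into lower symmetry, which is a different statement from \eqref{dist3R}.
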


\begin{proof}
We organize the proof for the case when $\mathbf S$ is left-handed in the following steps:

1) Upper symmetry in the left-handed case is characterized by
\begin{equation}\label{LUS}\tag{2.1L}
x\vee y\vee (x\wedge y) = y\vee x.
\end{equation}
Since $x, y \succeq y\wedge x, (y\wedge x) \vee y \vee x$ reduces to $y \vee x$ in the left-handed case. 

2) For all $x, y, z \in S$, $x\vee y \geq (x\vee (y\wedge z)) \wedge (z\vee (y\wedge z))$.

Set $u = (x\vee (y\wedge z)) \wedge  (z\vee (y\wedge z))$. Since $u\wedge y =_{\eqref{id3}} (x\vee (y\wedge z)) \wedge  (y\wedge z) =_{\eqref{absidentities}} y\wedge z \leq y$, we get

$$
\begin{array}{lcl}
x \vee y \vee u &=_{\eqref{LUS}}& x \vee u \vee y \vee (u \wedge y) = x \vee u \vee y \vee (y \wedge z) =_{\eqref{absidentities}} x \vee u \vee y \\
&=_{\eqref{reg}}& x\vee y\vee u\vee y =_{\eqref{id1}} x\vee (y\wedge z)\vee y\vee u\vee y  \\
&=_{(LH)}& x\vee (y\wedge z)\vee u\vee y= x\vee (y\wedge z)\vee ((x\vee (y\wedge z))\wedge (z\vee (y\wedge z)])\vee y \\ &=_{\eqref{absidentities}}& x\vee (y\wedge z)\vee y =_{\eqref{id1}} x\vee y,  \\
\end{array}
$$

which is what needed to be shown in the left-handed case.

3) For all $x,y,z\in S$, $z\wedge [x\vee (y\wedge z)] = z\wedge (x\vee y)\wedge (x\vee (y\wedge z))$.

$$
\begin{array}{lcl}
z \wedge (x \vee (y \wedge z)) &=_{\eqref{absidentities}}&  z \wedge (z \vee (y \wedge z)) \wedge (x \vee (y \wedge z)) \\
&=_{\eqref{LH}}& z \wedge (z \vee (y\wedge z)) \wedge u \\
&=_{(2)}& z \wedge (z \vee (y \wedge z)) \wedge (x \vee y) \wedge u  \\
&=_{\eqref{LH}}& z\wedge (z\vee (y\wedge z))\wedge (x\vee y)\wedge (x\vee (y\wedge z)) \\
&=_{\eqref{absidentities}}&  z \wedge (x \vee y) \wedge (x \vee (y \wedge z)).
\end{array}
$$ 

4) For all $x, y, z \in S$, $z \wedge (x \vee (y \wedge x \wedge z)) = z \wedge (x \vee (y \wedge x))$. Replacing $y$ by $y \wedge x$ in (3) gives

$$
\begin{array}{lcl}
z\wedge (x\vee (y\wedge x\wedge z)) &=& z\wedge (x\vee (y\wedge x))\wedge (x\vee (y\wedge x\wedge z))  \\
&=_{\eqref{id2}}& z \wedge (x \vee (y \wedge x)) \wedge x \wedge (x \vee (y \wedge x \wedge z))  \\
&=_{\eqref{absidentities}} & z\wedge (x\vee (y\wedge x))\wedge x  \\
&=_{\eqref{id2}}& z\wedge (x\vee (y\wedge x)). 
\end{array}
$$

5) Concluding the left-handed case. Replace $x$ with $y \vee x$ in (4). On the left side we get

$$z\wedge (y\vee x\vee (y\wedge (y\vee x)\wedge z)) =_{\eqref{absidentities}} z\wedge (y\vee x\vee (y\wedge z)).$$

On the right side,

$$z \wedge (y \vee x \vee (y \wedge (y \vee x))) =_{\eqref{absidentities}}  z \wedge (y \vee x \vee y) =_{\eqref{LH}} z \wedge (x \vee y).$$

Therefore $z \wedge (x \vee y) = z \wedge (y \vee x \vee (y \wedge z))$ which is \eqref{dist3L} with the variables permuted.

The verification of the right-handed case is similar, and the general case follows. 
\end{proof}

These results and their duals lead to:

\begin{theorem}\label{distrib}

\begin{itemize}
\item[(i)] An upper symmetric skew lattice is $\wedge$-distributive if and only if it is both quasi-distributive and linearly distributive;
\item[(ii)] A lower symmetric skew lattice is $\vee$-distributive if and only if it is both quasi-distributive and linearly distributive.
\item[(iii)] Thus a symmetric skew lattice is distributive if and only if it is both quasi-distributive and linearly distributive.
\end{itemize}

\end{theorem}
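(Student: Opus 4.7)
The plan is to assemble Theorem \ref{distrib} directly from the machinery already in place: Theorem \ref{distlindist}, which tells us exactly when a quasi-distributive, linearly distributive skew lattice is $\wedge$-distributive (namely, precisely when \eqref{dist3} holds), together with Proposition \ref{USL}, which supplies \eqref{dist3} for free under upper symmetry. The dual pairing yields (ii), and (iii) is just the conjunction.

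For (i), I would argue the forward implication first. If $\mathbf{S}$ satisfies \eqref{GMD}, then on its maximal lattice image $\mathbf{S}/\DD$ the identity \eqref{GMD} collapses to the ordinary distributive law, so $\mathbf{S}/\DD$ is distributive and $\mathbf{S}$ is quasi-distributive by the characterization recalled in Section \ref{Introduction}. Moreover, the corollary preceding Theorem \ref{lindist} shows \eqref{GMD} implies \eqref{GMDe}, and Theorem \ref{lindist} identifies \eqref{GMDe} with linear distributivity; thus $\mathbf{S}$ is linearly distributive. For the converse, assume $\mathbf{S}$ is upper symmetric, quasi-distributive, and linearly distributive. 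Proposition \ref{USL} gives \eqref{dist3}, and Theorem \ref{distlindist} then upgrades the quasi-distributive, linearly distributive hypothesis to \eqref{GMD}, i.e.\ $\wedge$-distributivity. (Upper symmetry enters only through Proposition \ref{USL}.)

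Part (ii) follows by the standard $\wedge$--$\vee$ duality: a lower symmetric skew lattice satisfies the dual of \eqref{dist3} by the dual of Proposition \ref{USL}, and the second assertion of Theorem \ref{distlindist} then yields \eqref{GJD}. The forward implication is dual to the one in (i), with \eqref{GJD} implying \eqref{GMDf} (hence linear distributivity via Theorem \ref{lindist}) and distributivity of $\mathbf{S}/\DD$. Finally, (iii) is immediate: a symmetric skew lattice is both upper and lower symmetric, so (i) and (ii) jointly show that quasi-distributivity plus linear distributivity give both \eqref{GMD} and \eqref{GJD}, i.e.\ distributivity; the reverse direction is again trivial since \eqref{GMD} and \eqref{GJD} individually imply quasi-distributivity and linear distributivity as in the forward argument for (i).

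The proof is essentially a bookkeeping exercise once the main technical inputs are in hand, so I do not expect any significant obstacle: all the nontrivial work sits in Proposition \ref{USL} (which reduces upper symmetry to the identity \eqref{dist3}) and in Theorem \ref{distlindist} (which reduces \eqref{dist3} plus quasi- and linear distributivity to \eqref{GMD}). The only point to be careful about is the derivation of quasi-distributivity from \eqref{GMD} alone—this must be checked by collapsing \eqref{GMD} to ordinary lattice distributivity on $\mathbf{S}/\DD$, rather than trying to forbid $\mathbf{M}_3$ and $\mathbf{N}_5$ directly inside $\mathbf{S}$.
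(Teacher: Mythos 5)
Your proposal is correct and follows exactly the route the paper intends: the theorem is stated immediately after Proposition \ref{USL} with the remark that ``these results and their duals lead to'' it, i.e.\ the converse directions come from Proposition \ref{USL} (and its dual) feeding \eqref{dist3} (and its dual) into Theorem \ref{distlindist}, while the forward directions follow since \eqref{GMD} alone yields quasi-distributivity via $\mathbf S/\DD$ and linear distributivity via \eqref{GMDe} and Theorem \ref{lindist}. Your bookkeeping, including the duality argument for (ii) and the conjunction for (iii), matches the paper's implicit proof.
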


Prover9 has also provided proofs of the following results, which we just state.

\begin{theorem} 
A simply cancellative skew lattice is distributive if and only if it is linearly distributive. 
\end{theorem}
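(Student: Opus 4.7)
The forward direction is immediate from the definition of linear distributivity: if $\mathbf{S}$ satisfies \eqref{GMD} and \eqref{GJD}, these identities are inherited by every subalgebra, in particular every totally preordered one, whence $\mathbf{S}$ is linearly distributive. For the converse, suppose $\mathbf{S}$ is simply cancellative and linearly distributive. As observed in the paragraph preceding Theorem~\ref{strcat}, the cancellation law \eqref{simpcanc} forbids $\mathbf{M}_3$ and $\mathbf{N}_5$ as subalgebras, so $\mathbf{S}$ is quasi-distributive. Theorem~\ref{distlindist} then reduces the problem to verifying \eqref{dist3} and its $\vee$-$\wedge$ dual, since together with quasi-distributivity and linear distributivity these are exactly what distributivity requires.

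The plan for \eqref{dist3} is to apply simple cancellation to its two sides. Write $L$ for the left-hand side $x\wedge\bigl((y\wedge x\wedge y)\vee z\vee y\vee z\vee (y\wedge x\wedge y)\bigr)\wedge x$ and $R$ for the right-hand side $x\wedge (y\vee z\vee y)\wedge x$. A routine computation in the distributive maximal lattice image $\mathbf{S}/\DD$ shows that $L$ and $R$ lie in the common $\DD$-class $\DD_x\wedge (\DD_y\vee \DD_z)$, and clearly $L, R\le x$. By \eqref{simpcanc}, to conclude $L = R$ it suffices to exhibit a witness term $w$ (built from $x, y, z$) such that $L\vee w\vee L = R\vee w\vee R$ and $L\wedge w\wedge L = R\wedge w\wedge R$. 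The left-right decomposition of Section~\ref{Background} allows one to assume $\mathbf{S}$ is either left-handed or right-handed; in the left-handed case \eqref{dist3} specializes to \eqref{dist3L}, and the linearly distributive identities \eqref{GMDg} and \eqref{LLD}, together with the left-handed simplifications \eqref{LH}, are used to collapse both cancellation equalities to a common form. The right-handed case is dual, and the dual of \eqref{dist3} then follows by the $\vee$-$\wedge$ symmetry of the hypotheses.

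The main obstacle is identifying the witness $w$. The most natural candidates—$w = x$ (since $L, R\le x$) and $w = x\wedge z\wedge y\wedge x$ (a common lower bound, as in the proof of Theorem~\ref{strcat})—are circular: each makes one of the cancellation equalities trivial but leaves the other equivalent to the desired conclusion $L = R$. A subtler $w$, interacting nontrivially with the iterated joins and meets present in $L$ and $R$, is required. This is precisely the step at which the authors invoke Prover9: the machine search produces a term $w$ for which, after applications of regularity \eqref{reg}, absorption \eqref{absidentities}, and the linearly distributive identities, both cancellation equalities reduce to a common expression on each side. Once such a $w$ is available, the remaining verification is a routine though lengthy symbolic manipulation.
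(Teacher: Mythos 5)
Your reduction is sound as far as it goes: the forward direction is immediate, simple cancellativity gives quasi-distributivity (as the paper notes, \eqref{simpcanc} excludes $\mathbf M_{3}$ and $\mathbf N_{5}$), and Theorem \ref{distlindist} then correctly reduces the problem to establishing \eqref{dist3} and its dual under the hypotheses of linear distributivity and simple cancellativity. But at exactly that point your argument stops being a proof. You propose to derive \eqref{dist3} by applying \eqref{simpcanc} to its two sides $L$ and $R$, which requires exhibiting a term $w$ with $L\vee w\vee L = R\vee w\vee R$ and $L\wedge w\wedge L = R\wedge w\wedge R$, and you then state explicitly that you cannot produce such a $w$: the obvious candidates are circular, and you defer the construction to a hypothetical machine search. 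That is the entire mathematical content of the converse direction, so what you have written is a plan with its central step missing, not a proof. There is also no a priori guarantee that the strategy itself succeeds --- i.e.\ that a single witness term $w$ exists for which both cancellation equalities are verifiable from linear distributivity, regularity and absorption without already knowing $L=R$ --- so even the shape of the missing argument is conjectural.

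For what it is worth, the paper itself supplies no human-readable proof of this theorem either: it is one of the results introduced by ``Prover9 has also provided proofs of the following results, which we just state.'' So your setup cannot be checked against an actual argument in the paper, and your guess that the machine proof proceeds by finding a cancellation witness is speculation rather than reconstruction. To turn your proposal into a proof you would need either to produce and verify the witness $w$ explicitly, or to find a direct equational derivation of \eqref{dist3} (and its dual) from \eqref{GMDe}/\eqref{GMDg}, \eqref{simpcanc}, and the left-handed identities, for instance by first reducing to the left- and right-handed cases via $S\hookrightarrow S/\RR\times S/\LL$ as you suggest and then working with \eqref{dist3L}.
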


\begin{theorem}\label{biconditional} 
A quasi-distributive skew lattice $\mathbf S$ is distributive if it is biconditionally distributive: 
\eqref{GMD} holds for any particular $x, y, z \in S$ iff \eqref{GJD} does.
\end{theorem}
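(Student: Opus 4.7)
The plan is to reduce to the one-handed case via the Second Decomposition Theorem and then upgrade the $\DD$-equivalence afforded by quasi-distributivity to equality using the biconditional hypothesis. Since quasi-distributivity, biconditional distributivity, and the targets \eqref{GMD} and \eqref{GJD} all lift through the embedding $\mathbf S \hookrightarrow \mathbf S/\RR \times \mathbf S/\LL$, it suffices to prove the theorem for left-handed skew lattices, the right-handed case being dual. In the left-handed setting Lemma \ref{LRdist} reduces \eqref{GMD} to $x \wedge (y \vee z) = (x \wedge y) \vee (x \wedge z)$ and \eqref{GJD} to its join dual $(y \wedge z) \vee x = (y \vee x) \wedge (z \vee x)$, and quasi-distributivity already gives that both sides of each identity are $\LL$-related, since $\DD = \LL$ in the left-handed case and passing to $\mathbf S/\DD$ identifies the two sides in the distributive lattice image.

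The strategy is then to argue by contradiction: suppose that at some triple $(x,y,z)$ we have $x \wedge (y \vee z) \neq (x \wedge y) \vee (x \wedge z)$. By biconditionality and the reduction above, the dual identity also fails at this triple. I would construct a related witness triple $(x^*, y^*, z^*)$ built from $x, y, z$ at which one of \eqref{GMD} or \eqref{GJD} holds tautologically; by biconditionality the other identity holds there as well, and combining this with regularity \eqref{reg}, absorption \eqref{absidentities}, and the quasi-distributive $\LL$-equivalence one pulls back an equality at $(x, y, z)$. Natural first attempts are substitutions such as $y^* = x \wedge y \wedge x$, $z^* = x \wedge z \wedge x$, forcing everything below $x$, or $y^* = x \vee y \vee x$, $z^* = x \vee z \vee x$, forcing everything above $x$, since in each case one outer $\wedge x$ or $\vee x$ collapses by absorption.

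The principal obstacle is choosing a witness triple that yields nontrivial information. Both the ``below $x$'' and the ``above $x$'' substitutions collapse both sides of both identities simultaneously (each reduces to a trivial $a = a$), so biconditionality at such triples extracts nothing. An effective witness must therefore mix ``above $x$'' and ``below $x$'' structure nontrivially, possibly using compound terms like $y^* = y \vee z$ paired with a derived $z^* = y \wedge z$, or iterated $\wedge$--$\vee$ alternations, and the successful chain likely requires several alternations between quasi-distributivity (to collapse a $\DD$-class) and biconditionality (to toggle a $\wedge$-identity into a $\vee$-identity) before returning to the original triple. This combinatorial search is exactly the kind of equational reasoning for which Prover9 is well suited, which is presumably why the authors record the theorem without a human-readable proof. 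Once equality is established at every triple in the left-handed factor, the dual argument handles the right-handed factor, and the embedding into the direct product closes the general case.
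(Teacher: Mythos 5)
You have correctly set up the standard reductions: passing to the left- and right-handed factors via $\mathbf S \hookrightarrow \mathbf S/\RR \times \mathbf S/\LL$ is legitimate because the hypotheses and the conclusion are all preserved by and reflected from the factors, and quasi-distributivity does give that the two sides of \eqref{GMD} (and of \eqref{GJD}) are $\DD$-related. But after that setup the argument stops. The entire content of the theorem is the step you describe only conditionally: ``I would construct a related witness triple \dots one pulls back an equality at $(x,y,z)$.'' You then report that the obvious witness triples (everything pushed below $x$, or everything pushed above $x$) collapse both identities to tautologies and so extract nothing from biconditionality, and that a successful choice ``likely requires several alternations'' that you have not found. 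That is an accurate diagnosis of where the difficulty lies, but it means no proof has been given: the passage from the pointwise equivalence of \eqref{GMD} and \eqref{GJD} to the universal validity of both is precisely what must be established, and nothing in the proposal establishes it. What you have is a research plan with the central deduction left open.

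For what it is worth, the paper offers no human-readable argument to compare against: Theorem \ref{biconditional} is one of the results the authors ``just state,'' the proof having been produced by Prover9. So you cannot be faulted for failing to match the paper's method, but the submission is still incomplete. If you want a human route, a more promising organization than a direct contradiction at an arbitrary triple would be to verify the sufficient conditions of Theorem \ref{distlindist}: show that biconditional distributivity together with quasi-distributivity forces linear distributivity (work inside skew chains, where the subalgebra is automatically symmetric) and forces \eqref{dist3} and its dual (the EMCC and EJCC). Each of those is a more structured and more local target than the raw identity \eqref{GMD}, and the paper's machinery in Sections \ref{Midpoints and Distributive Skew Chains} and \ref{From Linear Distributivity to Distributive Skew Lattices} is designed for exactly that kind of attack.
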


A skew lattice $\mathbf S$ is \emph{relatively distributive} if every quasi-distributive subalgebra of $\mathbf S$ is distributive. 
Such a skew lattice is linearly distributive. More general statements of Theorems \ref{distrib} (iii) and \ref{biconditional} are as follows:

\begin{corollary} 
Biconditionally distributive skew lattices as well as symmetric, linearly distributive skew lattices are relatively distributive.
\end{corollary}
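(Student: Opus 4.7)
The plan is to fix an arbitrary quasi-distributive subalgebra $\mathbf T$ of $\mathbf S$ and show that $\mathbf T$ is distributive by invoking, according to whichever hypothesis holds of $\mathbf S$, either Theorem \ref{biconditional} or Theorem \ref{distrib}(iii). The essential observation I would make first is that each of the hypothesized properties of $\mathbf S$ — biconditional distributivity in the first half and the pair symmetry/linear distributivity in the second half — passes to $\mathbf T$. Once this is granted, $\mathbf T$ satisfies all the hypotheses of the appropriate cited theorem and the conclusion is immediate.

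For the biconditionally distributive case, I would note that biconditional distributivity is the universally quantified biconditional ``for every $x,y,z$ in the algebra, the instance \eqref{GMD} at $(x,y,z)$ holds iff the instance \eqref{GJD} at $(x,y,z)$ does.'' Because $\mathbf T$ is closed under $\wedge$ and $\vee$, the two sides of each instance are computed identically in $\mathbf T$ and in $\mathbf S$, so restricting the universal quantifier to $T\subseteq S$ shows that $\mathbf T$ is itself biconditionally distributive. Together with the quasi-distributivity of $\mathbf T$, Theorem \ref{biconditional} then yields that $\mathbf T$ is distributive.

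For the symmetric, linearly distributive case, I would appeal to the fact that symmetry is defined by the identities \eqref{syma} and \eqref{symb} and that linear distributivity is characterized by the identity \eqref{GMDe} (Theorem \ref{lindist}); both are varietal and therefore descend to subalgebras. Hence $\mathbf T$ is symmetric, linearly distributive, and quasi-distributive, and Theorem \ref{distrib}(iii) gives that $\mathbf T$ is distributive. The only point in the argument that requires any real attention — and hence the closest thing to an obstacle — is recognizing that biconditional distributivity, though not itself an identity, is nevertheless a universal sentence over triples of elements built from equations between polynomial terms, and so is inherited by subalgebras just as cleanly as symmetry and linear distributivity are.
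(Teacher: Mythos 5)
Your argument is correct and is exactly the route the paper intends (the paper states the corollary as an immediate consequence of Theorems \ref{distrib}(iii) and \ref{biconditional}, leaving the proof implicit): the hypotheses pass to any quasi-distributive subalgebra $\mathbf T$ --- symmetry and linear distributivity because they are defined by identities, and biconditional distributivity because it is a universally quantified sentence in equations between terms --- and the cited theorems then apply to $\mathbf T$.
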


Examples \ref{nondistrib} show that relative distributivity is properly stronger than linear distributivity.
The modular lattice $\mathbf M_{3}$ shows that biconditional distributivity is properly stronger than relative distributivity. 
Indeed any lattice is relatively distributive, but elements $x$, $y$ and $z$ are easily found in $\mathbf M_{3}$ satisfying exactly one of \eqref{GMD} or \eqref{GJD}. 
It can be shown that biconditionally distributive skew lattices form a variety. 
It can also be shown, using Prover9, that a skew lattice is relatively distributive if and only if it is linearly distributive and possesses both the EMCC and EJCC properties. 
Thus relatively distributive skew lattices also form a variety.


\end{document}